\documentclass[11pt]{article}

\usepackage[margin=1in]{geometry}
\usepackage{graphicx}
\usepackage{enumerate}
\usepackage{amsmath}
\usepackage{amssymb}
\usepackage{hyperref}
\usepackage{xcolor}
\usepackage{amsthm}
\usepackage[round]{natbib}
\usepackage{graphicx}
\usepackage{subcaption}
\usepackage{booktabs}
\usepackage{float}
\DeclareMathOperator*{\argmax}{arg\,max}

\allowdisplaybreaks

\newtheorem{theorem}{Theorem}
\newtheorem{proposition}[theorem]{Proposition}
\newtheorem{lemma}[theorem]{Lemma}
\newtheorem{definition}[theorem]{Definition}
\newtheorem{assumption}[theorem]{Assumption}
\newtheorem{corollary}[theorem]{Corollary}
\newtheorem{remark}[theorem]{Remark}
\newcommand{\FIGURE}[3]{\centering #1\caption{#2}}

\newcommand{\tr}{\mathrm{Tr}}

\newcommand{\cW}{\mathcal W}
\newcommand{\cG}{\mathcal G}

\newcommand{\cL}{\mathcal L}

\newcommand{\bH}{\mathbf{H}}
\newcommand{\bW}{\mathbf{W}}
\newcommand{\R}{\mathbb R}
\newcommand{\E}{\mathbb E}

\begin{document}
\title{Continuous Policy and Value Iteration for Stochastic Control Problems with Inhomogeneous Synchronous Coupling}

\author{
Qi Feng\thanks{\noindent Department of
Mathematics, Florida State University, Tallahassee, 32306; email: qfeng2@fsu.edu. This author is partially supported by the National Science Foundation under grant \#DMS-2420029.}
\and Renjie Shao\thanks{\noindent Department of
Mathematics, Florida State University, Tallahassee, 32306; email: rs25a@fsu.edu.}
\and Gu Wang\thanks{ \noindent Department of
Mathematical Sciences, Worcester Polytechnic Institute, Worcester, MA 01609;
email: gwang2@wpi.edu. This author is partially supported
by National Science Foundation grant \#DMS - 2206282 and John F. Manning, Jr. '80 and Catherine C. Leser A\&S Faculty Research Grant from Worcester Polytechnic Institute.} 
}
\date{}
\maketitle
\begin{abstract}
We propose a continuous policy-value iteration framework for infinite-horizon stochastic control problems, in which the policy evolves through Langevin-type dynamics driven by the gradient of the in-homogeneous Hamiltonian, while the value function is updated simultaneously and continuously along the iteration, avoiding directly solving its corresponding  PDE at each step. The iteration scheme enjoys policy improvement under suitable regularity and admissibility assumptions and we show the exponential contraction of the joint policy–value dynamics in Wasserstein-2 distance under the Lipschitz condition in expectation via the inhomogeneous synchronous coupling and verify these conditions for linear–quadratic problems. Numerical experiments on linear–quadratic models and the optimal consumption–investment problems in an incomplete market factor model illustrate the policy improvement, convergence toward optimal policies, and the effect of decaying exploration noise.
\end{abstract}

{$\mathbf{Keywords}$.} Hamilton-Jacobi-Bellman equation; Reinforcement learning; Policy gradient ascent; Stochastic control; Synchronous coupling; Langevin dynamics.

{\textit{2000 AMS Mathematics subject classification}: 93E20, 93E35, 60H10} 

\section{Introduction}

Inspired by the (stochastic) gradient descent method for optimization problems, policy iteration algorithms for stochastic control problems have gained significant attention in recent years \citet{puterman1979convergence,puterman1981convergence,bertsekas2015value,jacka2017policy,kerimkulov2020exponential,kerimkulov2021modified}. These iteration schemes provide approximations, which converge to the optimal control and value function and early results focus on discrete time settings \citet{sutton1998reinforcement,bertsekas2012dynamic,szepesvari2022algorithms,bertsekas2011dynamic}. \citet{wang2020reinforcement,sethi2024modified,zhou2023policy,ma2024convergence} extend the discussion to a continuous-time setting, by applying various techniques, e.g. dynamic programming principle, stochastic maximum principle, and policy gradient. 

In the relaxed control setting, \citet{feng2026continuous} recently introduces a continuous iteration method, in which both the policy and the value function are described by Langevin dynamics, and exponential convergence to the optimal solution is shown under the monotonicity conditions for the value iteration. \citet{feng2026score} developed a numerical scheme based on the score-Hamiltonian projection, which allows implementing this iteration scheme  with high dimensional state and control. In this paper, we propose a continuous iteration for stochastic control problems in the classical setting. The monotonicity conditions are easily fail in this set up and the control iteration following the Langevin dynamics makes the value function random -- a major difference from the relaxed control problem where the policy, as the law of the Langevin dynamics, is deterministic. 

Based on gradient ascent argument, Lemma \ref{lemma:policy-improve-c} shows that value functions are improved along the policy dynamics as $\tau\rightarrow \infty$ in the classical control framework with temperature coefficient $\lambda=0$ in the control iteration dynamics, while for $\lambda \neq 0$, the improvement is in average sense. In the case of non-concave Hamiltonian, Proposition \ref{prop:control-update-classic} prescribes a manual update of the control to jump out of the local maximum for the deterministic iteration. For the convergence of general iteration dynamics with non-zero temperature,  we adapt the idea of synchronous coupling techniques from Langevin dynamic \citet{Dalalyan2020} \citet{EGZ19} to establish exponential contraction of the joint policy-value dynamics under coupling bounds and small-gain condition \eqref{eq:smallgain}. These newly proposed conditions replace the monotonicity condition in \citet{feng2026continuous} by assumptions on coupling bounds, ensuring that coupling contraction dominates the mutual interaction between policy and value dynamics, inspired by \citet{smallgain1996}. To retrieve the non-random optimal control strategy, we also apply the annealing-based continuous-time Langevin dynamics MCMC algorithms \citet{kirkpatrick1983optimization,vcerny1985thermodynamical,geman1986diffusions,gelfand1990sampling,tang2021simulated,feng2024fisher},
by scheduling the temperature $\lambda(\tau)\rightarrow 0$, as $\tau\rightarrow \infty$. 

Our approach builds a bridge between solving stochastic control problems and applying non-convex optimization techniques inspired by annealing-based continuous-time Langevin dynamics MCMC algorithms. By incorporating our continuous dynamics in iteration, we extend the policy improvement algorithm to policy gradient ascent/descent methods, which have the potential to incorporate stochastic gradient ascent/descent algorithms \citet{welling2011bayesian,zhang2017hitting,raginsky2017non,dalalyan2020sampling} for high-dimensional problems. Compared to the continuous dynamics with gradient flow in \citet{vsivska2024gradient,ZZH2025} for finite horizon problems, we take advantage of the infinite horizon setting and only update the policy locally and infinitesimally at every step in the iteration, instead of its whole path at every point in time. Since the value function is updated simultaneously with the control, our continuous policy dynamics is inhomogeneous in nature, i.e. the objective function in the Langevin dynamics moves as iteration proceeds. 

We test our iteration method in an optimal consumption-investment problem with power utility in a factor model. In the general setting with incomplete market and beyond the case of logarithmic utility, the associated Hamilton-Jacobi-Bellman (HJB) equation does not have closed-form solutions and there has been vast literature on approximate solutions, using e.g. market completion \citet{he1991consumption,karatzas1991martingale}, duality argument \citet{GW2020,bick2013solving} or converging sub- and super-solutions \citet{gutekunst2026optimalinvestmentconsumptionstochastic}. All of these methods require delicate mathematical treatment and lose the tractability or its accuracy even beyond the one-dimension case. Our method provide easily implementable alternative, which only requires elementary calculations through the iteration, and provides accurate approximations with exponential convergence.

This paper is organized as follows. In Section \ref{sec: classic}, we define the infinite horizon stochastic control problem and its associated Hamilton-Jacobi-Bellman (HJB) equation, which motivates the iteration scheme. We show that with constant temperature in the Langevin dynamics for the control iteration, the corresponding value increases in average. In Section \ref{sec: convergence}, we analyze the convergence of the scheme via synchronous coupling, under the Lipschitz conditions in expectation along the iteration, and we verify the conditions for the linear-quadratic model. To retrieve the non-random optimal solution, we also use the annealing technique and decrease the temperature along the dynamics, which allows us to the show exponential convergence in $L^2$. Section \ref{sec: numerics} presents numerical experiments on a linear-quadratic model and the example in mathematical finance.

\section{Infinite Horizon Stochastic Control Problem}\label{sec: classic}
Consider a controlled diffusion process in \(\mathbb{R}^d\), which follows
\begin{equation*}
	dX^u_t = b(X^u_t,u_t)dt +\sigma(X^u_t,u_t)dW_t, \quad X_0 = x,
\end{equation*}
where $b: \mathbb R^d\times \mathbb R^n \rightarrow \mathbb R^d$, $\sigma: \mathbb R^d\times \mathbb R^n \rightarrow \mathbb R^{d\times k}$ satisfying Assumption \ref{classical-wellposed} below, and $W$ is a $d$-dimensional Brownian motion. For the ease of notation, we drop the superscript $u$ in the following discussion if no ambiguity arises.
The agent's goal is to choose the optimal control process from the admissible set, which maximizes the cumulative discounted running payoff $f(X_t,u_t)$, i.e., $V(x) =  \sup\limits_{u\in\mathcal A(x)} J(x,u)$, where
\begin{equation*}
	J(x,u) =  \mathbb E\left[\int_0^\infty e^{-\beta t}f(X_t,u_t)dt\right],
\end{equation*}
and the admissible set \(\mathcal{A}\) of the control $u$ is defined as
\begin{definition}
For any initial state $x\in\mathbb R^d$, the admissible set $\mathcal A(x)$ is a collection of strategies $u = \{u_t \in \mathbb R^n, t\geq 0\}$, progressively measurable to $\{\mathcal F^W_t\}_{t\geq 0}$, the filtration generated by \(W\), and satisfy 
\begin{enumerate}
	\item[(i)] $\mathbb E\left[\int_0^T \left(|b(X_t,u_t)|^2 + \tr[\sigma\sigma^\top(X_t,u_t)]\right) dt \right]<\infty$, for each $T > 0$.
	\item[(ii)] $J(x,u) < \infty$.
\end{enumerate}
\end{definition}
We make the following assumption on the model parameters and the well-posedness of the optimization problem.
\begin{assumption}\label{classical-wellposed}
 \begin{enumerate}
        \item[(i)] $\beta>0$, \(f, b\) and \(\sigma \sigma^\top\) are \(C^2\) in both \(x\) and \(u\), $b$ and $\sigma$ are bounded and all the first-order and second-order derivatives are uniformly bounded; 
        \item[(ii)] \(\sigma \sigma^\top\) is unifromly positive definite, i.e. \(\sigma \sigma^\top (x,u)\succeq cI\)  for some constant \(c >0\);
        \item [(iii)] $V(x) < \infty$.
    \end{enumerate}
\end{assumption}
The HJB equation associated to this control problem is
\begin{align}
	&-\beta v(x) + \sup_{u}\left(f(x,u) + \frac{1}{2}\tr[\sigma\sigma^\top(x,u):v_{xx}(x)] + b^\top(x,u)v_x(x)\right)\\
	=& -\beta v(x)  +\sup_{u} \mathbf{H}(x,u,v_x,v_{xx}) =0 \label{eq:HJB},
\end{align}
where $\mathbf{H}  :=\mathbf{H}(x,u,v_x,v_{xx}) =f(x,u) + \frac{1}{2}\tr[\sigma\sigma^\top(x,u):v_{xx}(x)] + b^\top(x,u)v_x(x)$, $v_x$ and $v_{xx}$ are the gradient and Hessian of $v$ with respect to $x$.
If $\mathbf{H}(x,u,v_x,v_{xx})$ is strictly concave in $u$, then there is a unique maximizer $\hat u$ pf $\bH$, which solves the equation $\nabla_u \mathbf{H}(x,u,v_x,v_{xx}) = 0$, and we denote as $U(x,v_{x},v_{xx})$. If $\hat v = V$, which is supposed to solve the HJB equation, then $\hat u = U(x,\hat v_x,\hat v_{xx})$ is the optimal control. 

Generally, especially with controlled diffusion coefficient, the HJB equation is fully non-linear and rarely obtains closed form solutions, let alone the optimal control out of the first order condition. Without the concavity of $\mathbf H$, even the first order condition may not yield the optimal control. In this paper, we propose an iteration scheme for both the control $u$ and its corresponding value $v$, which converge to the optimal solution: for each state $x\in\mathbb R^d$, given initial value $u^x_0$, let the control at state $x$ following the Langevin dynamics
	\begin{equation}\label{eq:u-dynamics-c}
		du^x_\tau = \nabla_u \mathbf{H}(x,u^x_\tau,v^\tau_x(0,x),v^\tau_{xx}(0,x))d\tau+ \sqrt{2\lambda}dB_\tau,
	\end{equation}
where $B$ is an $n$-Brownian motion independent of $W$, and we use $u_\tau^x$ as the approximate of the optimal control at $x$. The above dynamics depend on $v^\tau$, the result of the value iteration, which means that the control and the value of the optimization problem are updated simultaneously along the iteration. In the following we will denote the strategy $u^x_\tau$ as a function of state $x$, $u_\tau(x)$, to emphasize the dependence, and omit the argument $x$ if no ambiguity arises. We define $v^\tau$ in the following manner: 
\begin{definition}
For each $\tau\geq 0$, given $\{u_s(x),0\leq s\leq \tau\}$ for every $x$, define $u^\tau = \{u^\tau_t(X^\tau_t)\}_{0\leq t<\infty}$ as
\begin{equation}\label{eq:u-reverse}
u^\tau_t(x) = \begin{cases}
u_{\tau-t}(x), &0\leq t \leq \tau;\\
u_0(x), & t > \tau.
\end{cases}
\end{equation}
Denote the corresponding state process as $X^{\tau}$, satisfying
\begin{equation*}
	dX^\tau_t = b(X^\tau_t,u^\tau_t(X^\tau_t))dt +\sigma(X^\tau_t,u^\tau_t(X^\tau_t))dW_t,
\end{equation*}
and the corresponding value function, given a realization of strategy $u$ up to $\tau$, is
\begin{equation}
	v^\tau(t,x) = \mathbb E\left[\int_t^\infty e^{-\beta(s-t)}f(X^\tau_s,u^\tau_s(X^\tau_s))ds\,\Big|\,X^\tau_t = x, \mathcal F^B_\tau\right],\label{definition:v-tau-c}
\end{equation}
\end{definition}
where $\{\mathcal F^B_t\}_{t\geq 0}$ is the filtration generated by $B$. Notice that at each $\tau$, though strategy $u^\tau$ defined above is a strategy for an infinite horizon stochastic control problem, it only relies on $u_s$, $0\leq s\leq \tau$, i.e. the iteration results up to the current step, and not the future steps. Also, even though $u_\tau$ is a diffusion process driven by Brownian motion $B$, in the above definition of $v^\tau$, given $\mathcal F^B_\tau$, each $u^\tau_t$ is a deterministic function of the state $x$. Thus $u^\tau = \{u^\tau_t(X^\tau_t)\}_{0\leq t<\infty}$ is progressively measurable with respect to $\{\mathcal F^B_t, t\geq 0\}$. We make the following assumptions on $u^\tau$, $X^\tau$ and $v^\tau$.
\begin{assumption}\label{assumption-T-c}
	Given initial state $X_0 = x$, $\tau \geq 0$, and sufficiently small $h>0$,
\begin{enumerate}
	\item[(i)] For each realization of $\{u_s,0\leq s\leq \tau\}$, $u^\tau \in\mathcal A(x)$ and $v^\tau(t,x) \in C^{1,4}$.
	\item[(ii)]  $\mathbb E\left[\int_0^{\tau +h} \|v^{\tau}_x(s,X^{\tau+h}_s)\|^2ds\right]< \infty$.
    \item[(iii)] $\mathbb E\left[\left(|v^{\tau-s+l}|+\|v^{\tau-s+l}_x\| + \|v^{\tau-s+l}_{xx}\|+\|\nabla_u\ln \pi_{\tau-s+l}\|^2\right)(X^{\tau+h}_s,u_{\tau-s+l})\right]$ is bounded in $0\leq l \leq h$, for every $0\leq s\leq \tau+h$.
\end{enumerate}
\end{assumption}
Notice that $u^\tau$ defined above is time-inhomogeneous, and thus $v^\tau$ is a function of both time and state. However, we only use $v(0,x)$ as the approximate of the value function, and use it in the Langevin dynamics of $u_\tau$. Thus in following discussion, we mainly focus on the updates of $v^\tau(0,x)$ for a given $x$, and for the ease of notation, we drop the time argument and denote it as $v^\tau(x)$, if no ambiguity arises. The following lemma derive the continuous dynamics of the value $v^\tau$ in the iteration direction.
\begin{lemma}\label{dynamic v classical}
    The value function $v^{\tau}(x)$, and its first and second order derivatives $v^{\tau}_x$, $v^{\tau}_{xx}$ satisfy the following dynamics,
    \begin{align}\label{eq:v-dynamics-c}
dv^\tau(x) =& -\beta v^\tau(x) + \left(f +b^{\top}v^\tau_x + \frac{1}{2}\tr[\sigma\sigma^{\top}:v^\tau_{xx}]\right)(x,u_\tau)d\tau;\\
dv^\tau_x(x) =& -\beta v^\tau_x(x) +\left(\nabla_x\Big( f+b^{\top} v^\tau_x + \frac{1}{2}\tr[\sigma\sigma^{\top}:v^\tau_{xx}]\Big)\right)(x,u_\tau) d\tau;\nonumber\\
dv^\tau_{xx}(x) =& -\beta v^\tau_{xx}(x)+\left(\nabla^2_{xx}\Big( f(x,u_\tau) +b^{\top} v^\tau_x + \frac{1}{2}\tr[\sigma\sigma^{\top}:v^\tau_{xx}]\Big)\right)(x,u_\tau)d\tau.\nonumber
\end{align}
\end{lemma}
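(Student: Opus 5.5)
The idea is to use the time-shift structure of the reversed strategy \eqref{eq:u-reverse}. By construction $u^{\tau+h}_t = u^\tau_{t-h}$ for $t\ge h$, since both equal $u_{\tau+h-t}$ when $t\le \tau+h$ and $u_0$ afterwards. Hence, conditional on $\mathcal F^B_{\tau+h}$, the strategy $u^{\tau+h}$ on $[h,\infty)$ is exactly $u^\tau$ shifted by $h$. The Markov property of $X^{\tau+h}$ and the time-homogeneity of $b,\sigma,f$ then give
\begin{equation*}
v^{\tau+h}(h,x) = v^\tau(0,x).
\end{equation*}
More generally, $v^{\tau+h}(t+h,x)=v^\tau(t,x)$. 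The quantity $v^\tau(0,x)$ depends only on $\{u_s\}_{s\le\tau}$, so the extra conditioning on $\mathcal F^B_{\tau+h}$ is harmless.

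Next I would apply the dynamic programming identity for $v^{\tau+h}$ over $[0,h]$, with $X^{\tau+h}_0=x$:
\begin{equation*}
v^{\tau+h}(0,x) = \mathbb E\Big[\int_0^h e^{-\beta s} f(X^{\tau+h}_s,u_{\tau+h-s}(X^{\tau+h}_s))ds + e^{-\beta h} v^{\tau+h}(h,X^{\tau+h}_h)\,\Big|\,\mathcal F^B_{\tau+h}\Big].
\end{equation*}
Replacing $v^{\tau+h}(h,\cdot)$ by $v^\tau(0,\cdot)$ and applying It\^o's formula to $e^{-\beta s}v^\tau(0,X^{\tau+h}_s)$ gives
\begin{align*}
v^{\tau+h}(x)-v^\tau(x) = \mathbb E\Big[\int_0^h e^{-\beta s}\Big(&f -\beta v^\tau + b^\top v^\tau_x + \tfrac12\tr[\sigma\sigma^\top : v^\tau_{xx}]\Big)\big(X^{\tau+h}_s,u_{\tau+h-s}(X^{\tau+h}_s)\big)ds\Big].
\end{align*}
Here the $C^{1,4}$ regularity in Assumption \ref{assumption-T-c}(i) justifies It\^o, and the square integrability in (ii) kills the stochastic integral. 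Dividing by $h$ and letting $h\to0$, continuity of $X^{\tau+h}_s\to x$ and of $u_{\tau+h-s}\to u_\tau$ (the path of $u$ is continuous in $\tau$) yields the first equation of \eqref{eq:v-dynamics-c}. The limit is exchanged with the expectation using the uniform bounds in Assumption \ref{assumption-T-c}(iii) and dominated convergence. The right side is of bounded variation in $\tau$ because $\{u_s\}$ is fixed pathwise given $\mathcal F^B$, so no It\^o correction in $\tau$ arises.

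For $v^\tau_x$ and $v^\tau_{xx}$, I would not redo the probabilistic argument but differentiate the first identity in $x$. Write it in integrated form,
\begin{equation*}
v^\tau(x)=v^0(x)+\int_0^\tau\big(-\beta v^s + \bH(x,u_s(x),v^s_x,v^s_{xx})\big)ds.
\end{equation*}
Differentiating under the integral once and twice, which is justified by the $C^4$ regularity in $x$ and the bounded derivatives of $f,b,\sigma\sigma^\top$ from Assumption \ref{classical-wellposed}, gives the stated dynamics. There $\nabla_x$ acts on the whole composite expression evaluated at $(x,u_\tau)$, including through $u_\tau(x)$; this is consistent with the notation in the statement.

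The main obstacle is the limiting step $h\to0$. The integrand is evaluated along $X^{\tau+h}$ and at $u_{\tau+h-s}$, both moving with $h$, and $v^\tau$ is random through $B$. This is where Assumption \ref{assumption-T-c}(iii), with its boundedness in $l\in[0,h]$, is designed to supply the domination needed. The rigorous justification of the shift identity $v^{\tau+h}(h,\cdot)=v^\tau(0,\cdot)$ under the conditional expectation also needs care, but it is standard once the strong Markov property for $X^{\tau+h}$ with feedback controls is available from Assumption \ref{classical-wellposed}.
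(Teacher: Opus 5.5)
Your proposal is correct and follows the paper's proof essentially step for step. Both use the shift identity $v^{\tau+h}(h,\cdot)=v^\tau(0,\cdot)$, the one-step dynamic programming decomposition over $[0,h]$, It\^o's formula for $e^{-\beta s}v^\tau(0,X^{\tau+h}_s)$, the $h\to0$ limit by dominated convergence, and then differentiation in $x$ to get the $v^\tau_x$ and $v^\tau_{xx}$ equations. Your reading of $\nabla_x$ as a total derivative through $u_\tau(x)$ matches the paper's later form \eqref{eq:v_x-with-pq}, but that step tacitly requires $u_s(\cdot)$ to be $C^2$ in $x$. The paper leaves this assumption implicit too, when it introduces $p_\tau$ and $q_\tau$.
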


\begin{proof}{Proof.}
By definition, \(v ^{\tau +h}(0,x)\) can be written as 
    \[
        v ^{\tau +h}(0,x) = \mathbb{E}\left[\int_0^h e^{-\beta s} f (X_s ^{\tau +h}, u_s^{\tau +h})ds + \int_h^\infty e^{-\beta s} f (X_s ^{\tau +h}, u_s^{\tau +h})ds\Big|\,X^{\tau+h}_0 = x, \mathcal F^B_{\tau+h} \right].
    \]
The second term above
   \begin{align*}
    &\mathbb{E}\left[\int_h^\infty e^{-\beta s} f (X_s ^{\tau +h}, u_s^{\tau +h})ds \Big|\,X^{\tau+h}_0 = x, \mathcal F^B_{\tau+h} \right] \\
    =& \mathbb{E}\left[\mathbb{E}\big[\int_h^\infty e^{-\beta s} f (X_s ^{\tau +h}, u_s^{\tau +h})ds \big| X_h^{\tau+h}, \mathcal{F}^B _{\tau+h}\big]\Big| X_0^{\tau+h}  = x,\mathcal{F}^B _{\tau +h}\right] \nonumber\\
    =& \mathbb{E}\left[e^{-\beta h} \mathbb{E}\big[\int _h^\infty e^{-\beta (s- h)}f(X_s^{\tau+h}, u_s^{\tau +h})ds \big| X_h^{\tau+h},\mathcal{F}_{\tau+h}^B \big]\Big| X_0^{\tau+h} = x, \mathcal{F}^B_{\tau+h}\right]\\ 
    =& \mathbb{E}\left[e^{-\beta h} v^{\tau+h}(h,X_h^{\tau +h})\Big| X_0^{\tau+h} = x,\mathcal{F}_{\tau+h}^B\right]. \nonumber
   \end{align*}
Plugging this back to $v^{\tau+h}(0,x)$ and subtract $v^\tau(0,x)$ on both sides,
   \[
    \begin{aligned}
    &v^{\tau+h}(0,x) - v^\tau (0,x)\\
    &\quad = \mathbb{E}\left[\int_0^h e^{\beta s}f(X_s ^{\tau +h}, u_s^{\tau +h}) ds+ e^{-\beta h} v ^{\tau +h}(h,X_h^{\tau +h}) -v ^ \tau (0,x) \Big| X_0^{\tau +h} =x, \mathcal{F}_{\tau+ h}^B\right].
    \end{aligned}
   \]
   Note that by definition $u^{\tau+h}_{t+h}(x) = u^\tau_t(x) = u_{\tau -t}(x)$ for $t\leq \tau$ and both equal $u_0(x)$ for $t\geq \tau$. Therefore, 
   \begin{equation}
   v^{\tau +h }(h,x) = v ^\tau (0,x ),\label{eq:shift-identity}
   \end{equation}
   and
   \[
    \begin{aligned}
    &v ^{\tau+h}(0,x) - v ^\tau (0,x)\\
    &\quad = \mathbb{E}\left[\int_0^h e^{\beta s}f(X_s ^{\tau +h}, u_s^{\tau +h}) ds+ e^{-\beta h} v ^{\tau}(0,X_h^{\tau +h}) -v ^ \tau (0,x)\Big| X_0^{\tau +h} =x,\mathcal{F}_{\tau+ h}^B\right].
    \end{aligned}
   \]
   Applying it\^o's formula to \(e^{-\beta s} v ^{\tau}(0, X_s^{\tau +h})\) implies,
    \begin{align*}
        d(&e^{-\beta s} v ^{\tau}(0, X_s^{\tau +h})) \nonumber\\
        &=e^{-\beta s}\left(\Big(-\beta v ^{\tau} + b^T(X_s, u^{\tau +h} _s)v^\tau _x + \frac{1}{2}Tr[\sigma \sigma ^T v_{xx}^\tau ]\Big)(0, X_s^{\tau +h})ds +\sigma v_x^\tau(0,X_s^{\tau+h})dW_s\right).
    \end{align*}
    Under Assumption \ref{assumption-T-c}, the stochastic integral is a martingale and vanishes after taking expectation,
    \begin{align*}
        &v^{\tau +h}(0,x) - v ^\tau(0,x)\\
    = &\mathbb{E} \Bigg[ \int_0^h e^{-\beta s} \bigg( -\beta v^\tau(0, X_s^{\tau+h}) + f(X_s^{\tau+h}, u^{\tau+h}_{s}) + b^\top(X_s^{\tau+h}, u^{\tau+h}_{s}) v_x^\tau(0, X_s^{\tau+h}) \\
    & \quad  +\frac{1}{2} \text{Tr}[\sigma \sigma^\top (X_s^{\tau+h}, u_{s}^{\tau+h}) : v_{xx}^\tau(0, X_s^{\tau+h})] \bigg) ds \,\Bigg|\, X_0 = x, \mathcal{F}_{\tau+h}^B \Bigg].
    \end{align*}
    Divide by \(h\) on both sides, let \(h \to 0,\) and apply the dominated convergence theorem under Assumptions \ref{classical-wellposed} and \ref{assumption-T-c},
    \begin{align*}
        d v ^\tau (x) &=\lim\limits_{h \to 0} \frac{v^{\tau +h}(x) - v ^\tau(x)}{h} \nonumber\\
        &= (-\beta v ^\tau (x) + f(x,u_\tau) + b^T(x,u_\tau) v _x^\tau (x) + \frac{1}{2}Tr[\sigma \sigma ^T (x,u_\tau): v _{xx}^\tau(x)]) d\tau \nonumber\\
        & = \left(-\beta v ^\tau (x) + H(x,u_\tau, v  ^\tau_x,v_{xx}^\tau  )\right) d \tau.
    \end{align*}
    The dynamics of $v_x$ and $v_{xx}$ follow from taking derivative on both sides with respect to \(x\). 
\end{proof}

Next we discuss the improvement along the iteration, in terms of the value $v^\tau(x)$.
\begin{lemma}\label{lemma:policy-improve-c}
Suppose for every $\tau>0$, $0\leq s\leq \tau+h$ and sufficiently small $h>0$, $$\mathbb E\left[\|v^{\tau-s+l}_x(X^{\tau+h}_s)\| + \|v^{\tau-s+l}_{xx}(X^{\tau+h}_s)\|+\big|\int_{\mathbb R^n}\|\nabla_u \pi_{\tau-s+l}\|(X^{\tau+h}_s,u)du\big|\right]$$ is bounded in $(\tau-s)^-\leq l \leq h$, then for $\tau_1 < \tau_2$, $\mathbb E[v^{\tau_1}(t,x)] \leq \mathbb E[v^{\tau_2}(t,x)]$, if one of the following conditions holds:
\begin{enumerate}
\item[(i)] $\lambda = 0$, or more generally
\item[(ii)] with $\pi_{\tau-s+l}$ is the probability density function of $u_{\tau-s+l}$ following \eqref{eq:u-dynamics-c}, and $\bH^\tau(x,u) = \bH(x,u,v^\tau_x,v^\tau_{xx})$,
$$\mathbb E\left[\int_0^\tau e^{-\beta s}\left(\|\nabla_u \mathbf{H}^{\tau-s} -\frac{\lambda}{2}\nabla_u \ln\pi_{\tau-s}\|^2 - \frac{\lambda^2}{4}\|\nabla_u\ln\pi_{\tau-s}\|^2 \right)(X^{\tau}_s,u_{\tau-s})ds \right]\geq 0.$$ 
\end{enumerate}
\end{lemma}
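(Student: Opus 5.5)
The plan is to show that $\tau\mapsto \mathbb E[v^\tau(t,x)]$ is nondecreasing by computing its derivative in $\tau$ and showing it is nonnegative. The starting point is the same decomposition as in the proof of Lemma \ref{dynamic v classical}, but now we compare $v^{\tau+h}$ and $v^\tau$ along the whole path rather than only over $[0,h]$.

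\textbf{Step 1: a path-wise difference formula.} Write $v^{\tau+h}(0,x)$ as an integral along $X^{\tau+h}$. Apply It\^o's formula to $e^{-\beta s}v^{\tau-s}(0,X^{\tau+h}_s)$, i.e.\ the value functions indexed backwards so that the control $u^{\tau+h}_s=u_{\tau+h-s}$ is matched at each time $s$. Differencing in $\tau$ via Lemma \ref{dynamic v classical} gives a representation of the form
\[
v^{\tau+h}(x)-v^\tau(x)=\mathbb E\Big[\int_0^{\tau+h} e^{-\beta s}\big(\bH^{\tau-s}(X_s,u_{\tau+h-s})-\bH^{\tau-s}(X_s,u_{\tau-s})\big)ds\,\Big|\,\cdots\Big]+o(h).
\]
Each integrand is therefore the increment of the Hamiltonian $\bH^{\tau-s}$ when its control argument moves from $u_{\tau-s}$ to $u_{\tau-s+h}$. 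The stochastic integrals vanish in expectation by Assumption \ref{assumption-T-c}(ii). The bound assumed in the lemma, uniform in $(\tau-s)^-\le l\le h$, is exactly what allows dominated convergence when we divide by $h$ and let $h\to0$. This yields
\[
\frac{d}{d\tau}\mathbb E[v^\tau]=\mathbb E\Big[\int_0^\tau e^{-\beta s}\,\frac{d}{dl}\Big|_{l=0}\bH^{\tau-s}(X^\tau_s,u_{\tau-s+l})\,ds\Big].
\]

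\textbf{Step 2: differentiating along the Langevin flow.} Compute $\frac{d}{dl}\mathbb E[\bH^{\tau-s}(y,u_{\tau-s+l})]$ using \eqref{eq:u-dynamics-c}, with $y=X_s$ frozen thanks to the independence of $B$ and $W$.

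In case (i), $\lambda=0$, the control path is deterministic given $u_0$ and the chain rule gives $\|\nabla_u\bH^{\tau-s}\|^2\ge0$. Hence the derivative is nonnegative and monotonicity follows by integrating in $\tau$.

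In case (ii), It\^o's formula (or Fokker--Planck for $\pi$) gives the derivative $\mathbb E[\|\nabla_u\bH\|^2+\lambda\Delta_u\bH]$. Integrating by parts against $\pi$, which is justified by the integrability of $\|\nabla_u\pi\|$ in the hypothesis so that boundary terms vanish, turns $\lambda\Delta_u \bH$ into $-\lambda\nabla_u\bH\cdot\nabla_u\ln\pi$. Completing the square then gives
\[
\|\nabla_u \bH\|^2-\lambda\nabla_u\bH\cdot\nabla_u\ln\pi=\|\nabla_u\bH-\tfrac{\lambda}{2}\nabla_u\ln\pi\|^2-\tfrac{\lambda^2}{4}\|\nabla_u\ln\pi\|^2,
\]
so the hypothesis in (ii) is precisely nonnegativity of the derivative. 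Integrating in $\tau$ gives $\mathbb E[v^{\tau_1}]\le\mathbb E[v^{\tau_2}]$. The case of general $t$ follows from the shift identity \eqref{eq:shift-identity}.

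\textbf{Main obstacle.} The main difficulty is Step 1. One has to handle carefully the mismatch between $X^{\tau+h}$ and $X^\tau$, which are driven by different control paths, together with the time-reindexing of $v^{\tau-s}$. The $o(h)$ remainder must be controlled uniformly over $s\in[0,\tau+h]$, including the segment $s\in[\tau,\tau+h]$ where $u_0$ is used; this is the reason the hypothesis is stated for $l$ in the range $(\tau-s)^-\le l\le h$. To handle this, I would first attempt a telescoping argument via the dynamic programming identity $v^{\tau}(t,\cdot)=v^{\tau-t}(0,\cdot)$.
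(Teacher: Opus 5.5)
Your proposal follows the paper's proof essentially step for step. You derive the performance-difference representation from It\^o's formula for $e^{-\beta s}v^{\tau}(s,X^{\tau+h}_s)=e^{-\beta s}v^{\tau-s}(0,X^{\tau+h}_s)$ combined with the equation satisfied by $v^\tau$, then apply It\^o's formula in the iteration variable, integrate by parts to turn $\lambda\Delta_u\bH$ into $-\lambda\nabla_u\bH\cdot\nabla_u\ln\pi$, and complete the square, exactly as the paper does. The one caveat is shared with the paper: independence of $B$ and $W$ does not by itself let you freeze $y=X_s$, since $X^{\tau}_s$ and $\bH^{\tau-s}$ both depend on the $B$-path through the control field, so this step is heuristic at the same level as the paper's conditioning on $\mathcal F^W_s$.
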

\begin{proof}{Proof.}
    We discuss the case of \(t =0, \) while the following argument works for any \(t\geq 0 .\) By It\^o's formula,  
    \begin{align*}
        e^{-\beta t} v^\tau&(t,X_t ^{\tau +h}) - v^\tau (0,x)\\
     = &\int_0^t e^{-\beta s} \left(\Big(v _t^\tau-\beta v ^{\tau} + b^\top(X_s^{\tau +h}, u^{\tau +h} _s)v^\tau _x + \frac{1}{2}Tr[\sigma \sigma ^\top v_{xx}^\tau ]\Big)(0, X_s^{\tau+h})\right)ds \nonumber \\
     &+ \int_0^{t} e^{-\beta s}v _x^\tau (s,X_s^{\tau +h}) \sigma (X_s^{\tau +h}, u_{s}^{\tau+h})dW_s.  
    \end{align*}
    By assumption \ref{assumption-T-c}, taking the expectation leads the integral with respect to \(W\) vanish. Taking expectation on both sides,  
    \begin{align}\label{eq:v-tau-0-x}
        \mathbb{E}[v^\tau(0,x)]&\nonumber = \mathbb{E}\left[\int_0^{\tau+h} e^{-\beta s}\Big(-v_t^\tau+\beta v ^{\tau} - b^\top(X_s^{\tau +h}, u^{\tau +h}_s)v^\tau _x\Big)(0,X_s^{\tau+h}) ds\right] \nonumber\\
        -&\frac{1}{2} \mathbb{E}\Big[\int_0^{\tau+h}e^{-\beta s}\frac{1}{2}Tr[\sigma \sigma^\top v_{xx}^\tau(0, X_s^{\tau+h}) ]ds\Big]  + \mathbb{E}\left[e^{-\beta(\tau +h)}v^\tau (\tau + h, X_{\tau +h}^{\tau +h})\right]. 
    \end{align} 
    Note that \(M_t := \int_0^t e^{-\beta s} f(X_s^\tau, u_s^\tau)ds + e^{-\beta t} v^\tau(t,X_t^\tau)= \mathbb E[\int_0^\infty f(X_s^\tau, u_s^\tau)ds|\mathcal F^{W}_t,\mathcal F^B_\tau]\) is a martingale with respect to $\{\mathcal F^W_t\}_{t\geq 0}$, and thus
    \[
        v _t^\tau (t,x) - \beta v ^\tau (t,x) + b ^T (x, u_t^\tau) v _x^\tau (t,x) + \frac{1}{2} Tr[\sigma \sigma ^T (x, u_t^\tau) : v_{xx}^\tau (t,x)] + f(x,u_t^\tau) = 0.
    \]  
    Substituting $v^\tau_t$ into \eqref{eq:v-tau-0-x}, 
    \begin{align}
        &\mathbb{E}[v ^\tau(0,x)]\nonumber\\
    =&\ \mathbb{E} \left[ \int_{0}^{\tau+h} e^{-\beta s} \left(b(X_{s}^{\tau+h}, u_{\tau-s}) - b(X_{s}^{\tau+h}, u_{\tau+h-s})^{\top} v_{x}^{\tau}(s, X_{s}^{\tau+h})\right) ds \right]\nonumber \\
    &+ \frac{1}{2} \mathbb{E} \left[ \int_{0}^{\tau+h} e^{-\beta s} \text{Tr}\left[ \left(\sigma\sigma^{\top}(X_{s}^{\tau+h}, u_{\tau-s}) - \sigma\sigma^{\top}(X_{s}^{\tau+h}, u_{\tau+h-s})\right) : v_{xx}^{\tau}(s, X_{s}^{\tau+h}) \right] ds \right] \nonumber\\
    &+ \mathbb{E} \left[ \int_{0}^{\tau+h} e^{-\beta s} f(X_{s}^{\tau+h}, u_{\tau-s}) ds \right] + \mathbb{E} \left[ e^{-\beta(\tau+h)} v^{\tau}(\tau+h, X_{\tau+h}^{\tau+h}) \right]\nonumber\\
    =&\ \mathbb{E}[v^{\tau+h}(0, x)] + \mathbb{E} \left[ \int_{0}^{\tau+h} e^{-\beta s} \left(f(X_{s}^{\tau+h}, u_{\tau-s}) - f(X_{s}^{\tau+h}, u_{\tau+h-s})\right) ds \right] \nonumber\\
    &+ \mathbb{E} \left[ \int_{0}^{\tau+h} e^{-\beta s} \left(b(X_{s}^{\tau+h}, u_{\tau-s}) - b(X_{s}^{\tau+h}, u_{\tau+h-s})^{\top} v_{x}^{\tau-s}(X_{s}^{\tau+h})\right) ds \right] \nonumber \\
    &+ \frac{1}{2} \mathbb{E} \left[ \int_{0}^{\tau+h} e^{-\beta s} \text{Tr}\left[ \left(\sigma\sigma^{\top}(X_{s}^{\tau+h}, u_{\tau-s}) - \sigma\sigma^{\top}(X_{s}^{\tau+h}, u_{\tau+h-s})\right) : v_{xx}^{\tau-s}(X_{s}^{\tau+h}) \right] ds \right], \label{v-tau-0-rearrange}
    \end{align} 
    where we use the fact that \(v ^\tau (s, x) =v^{\tau-s}(0,x)\) from \eqref{eq:shift-identity}. Notice that  all three terms above involves change in functions for different controls. Apply It\^o formula with respect to the dynamics of $u_\tau$, 
    \begin{align*}
        &\mathbb{E}\left[f(X^{\tau +h}_s,u_{\tau-s}) - f(X^{\tau +h}_s, u_{\tau +h -s})\right] \nonumber\\
        &= - \mathbb{E}\left[\int _0^h ((\nabla_u f)^\top\nabla_u H^{\tau -s +l} + \lambda Tr[\nabla_{uu}^2 f]) (X_s^{\tau +h}, u_{\tau -s +l}) dl\right];\\
        &\mathbb{E}\left[(b(X_s, u_{\tau -s}) -b(X_s, u_{\tau +h -s}))^\top v_x^{\tau-s}(X_s^{\tau +h})\right]\nonumber\\
        &= \mathbb{E}\left[-\int_0^h \Big((\nabla_{u} b \nabla_u H^{\tau -s+l})^\top v_x^{\tau-s} + \lambda \sum\limits_{i=1}^{d} Tr[\nabla^2_{uu}b_i]v^{\tau-s}_{x_i}\Big)(X_s^{\tau +h}, u_{\tau-s+l})dl\right]; \\
        &\mathbb{E}\left[Tr[(\sigma \sigma^\top (X_{s}^{\tau+h},u_{\tau-s}) -\sigma \sigma ^\top(X_s^{\tau+h}, u_{\tau+h-s})):v_{xx}^{\tau-s} (X_s ^{\tau+h}, u_{\tau -s+l})]dl\right]\nonumber\\
        &= \mathbb{E}\left[ - \int_0^h(\nabla_u Tr[\sigma \sigma ^T:v_{xx}^{\tau-s} ]^\top \nabla_u H^{\tau -s +l} +\lambda  \nabla^2_{uu}Tr[\sigma \sigma ^T: v_{xx}^{\tau-s}])(X_s^{\tau+h}, u_{\tau-s+l})dl\right].
    \end{align*}
    Plugging this back to \eqref{v-tau-0-rearrange},
    \begin{align}
        \mathbb{E}[v^\tau (0,x)& - v ^{\tau+h}(0,x)]\nonumber\\
        =& -\mathbb{E}\left[\int_0^{\tau +h}e^{-\beta s}(\int_0^h (\nabla_u H^{\tau-s+l})^\top \nabla_u H^{\tau-s} (X_s^{\tau +h}, u_{\tau -s +l})dl )ds\right]\nonumber\\
        &-\lambda \mathbb{E}\left[\int_{0}^{\tau+h} e^{-\beta s} \int _0^h  Tr[\nabla^2_{uu} H^{\tau-s}](X_s^{\tau+h},u_{\tau-s+l})dl ds\right] \label{eq:tr-deltauu-H},
    \end{align} 
    with convention that all the terms evaluated at \(\tau - s + l \leq  0\) is its initial value at $0$. For the second term, integration by parts implies
    \[
        \mathbb{E}\left[Tr[\nabla^2_{uu} H^{\tau-s}](X_s^{\tau+h},u_{\tau-s+l})| \mathcal F^W_s\right] = - \mathbb{E}\left[(\nabla H ^{\tau-s})^\top \nabla_u \ln \pi_{\tau -s +l}(X_s^{\tau +h}, u_{\tau- s+l})| \mathcal F^W_s\right].
    \]
    where \(\pi _{\tau -s + l}\) is the distribution of \(u_{\tau-s+l}.\) Plugging this into \eqref{eq:tr-deltauu-H} and dividing by \(h\) on both sides implies:
    \begin{align*}
        & \frac{\mathbb{E}[v ^{\tau +h}(0,x) - v^\tau (0,x)]}{h}\nonumber\\
        =&\mathbb{E}\left[\int_0^{\tau +h}e^{-\beta s} \Big(\int _0^h \frac{1}{h} \int_0^h (\nabla_u H^{\tau-s+l})^\top \nabla_u H^{\tau-s}(X_s^{\tau +h}, u_{\tau -s +l})dl\Big) ds\right] \nonumber\\
        & -\lambda \mathbb{E}\left[ \int_0^{\tau +h}e ^{-\beta s} \int _0^h \frac{1}{h} (\nabla_u H^{\tau-s})^\top \nabla_u \ln \pi _{\tau -s +l}(X_s^{\tau+h}, u_{\tau -s+l})dlds\right]
        \end{align*}
    Let $h\rightarrow 0$ and by the dominated convergence theorem, 
     \begin{align*}
        &\frac{d \mathbb{E}[v ^\tau (0,x)]}{d \tau} \\
        = &\mathbb{E}\left[\int_0^{\tau}e^{-\beta s} \left\lVert\nabla_u H^{\tau-s})\right\rVert^2(X_s^{\tau}, u_{\tau -s}) ds\right]  -\lambda \mathbb{E}\left[ \int_0^{\tau +h}e ^{-\beta s} (\nabla_u H^{\tau-s})^\top \nabla_u \ln \pi_{\tau -s}(X_s^{\tau}, u_{\tau -s})ds\right]\\
        =&\mathbb{E}\Big[\int_0^\tau e^{-\beta s}\left\lVert \nabla_u H^{\tau -s} - \frac{\lambda }{2} \nabla_u \ln \pi _{\tau -s}\right\rVert^2 (X_s^\tau, u_{\tau-s}) ds \Big] -\mathbb{E}\Big[\int_0^\tau e^{-\beta s} \frac{\lambda ^2}{4} \left\lVert \nabla_u \ln \pi _{\tau-s}\right\rVert^2 (X_s^{\tau}, u_{\tau-s})ds \Big].
    \end{align*}
     If \(\lambda = 0\), then \( \frac{d \mathbb{E}[v ^\tau (0,x)]}{d \tau} \geq 0\) and therefore \(\mathbb{E}[v ^\tau (x)]\) is non-decreasing in \(\tau.\) Otherwise, the condition in (ii) also guarantees the improvement over $\tau$.
\end{proof}
In the rest of this section we focus on the case of $\lambda = 0$, where the iterations of the value function deterministic. In this case, the previous lemma shows that we have guaranteed policy improvement and thus the value iteration converges. The next proposition provides a sufficient condition for the optimality in the limit. The case of $\lambda \neq 0$ will be discussed in the next section after we introduce the synchronous coupling in the next section. The convergence of $u_\tau$,$v^\tau_x$ and $v^\tau_{xx}$, can be guaranteed by the Lipschitz conditions in expectation we introduce in the next section.
\begin{proposition}\label{prop:limit-det}
	With $\lambda = 0$, (i) $v^\tau$ converges to $v^*$, and (ii) If $v_x^\tau$ and $v^\tau_{xx}$ also converge, and $$\nabla_u (\mathbf{H}(x,u,v^*_x(x),v^*_{xx}(x)))|_{|u|=\infty} \neq 0,$$ then $u_\tau$ (up to a subsequence) converges to $u^*$ and $\nabla_u (\mathbf{H}(x,u^*,v^*_x(x),v^*_{xx}(x))) =0$. If in addition,
	$$u^*(x) = \argmax\limits_u \mathbf{H}(x,u,v^*_x(x),v^*_{xx}(x)),$$
	and $\hat u = \{u_t = u^*, t\geq 0\} \in \mathcal A(x)$, then $\hat u$ is the optimal strategy for the stochastic control problem $V(x) =  \sup\limits_{u\in\mathcal A(x)} J(x,u)$.
\end{proposition}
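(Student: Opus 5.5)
With $\lambda=0$ the dynamics \eqref{eq:u-dynamics-c} carry no noise, so $u_\tau$, $v^\tau$, $v^\tau_x$, $v^\tau_{xx}$ are deterministic and the expectation in Lemma \ref{lemma:policy-improve-c} can be dropped. Part (i) then rests on monotonicity plus a uniform upper bound. By Lemma \ref{lemma:policy-improve-c}(i), $\tau\mapsto v^\tau(x)$ is non-decreasing. Since $u^\tau\in\mathcal A(x)$ by Assumption \ref{assumption-T-c}(i), we have $v^\tau(x)=J(x,u^\tau)\le V(x)<\infty$ by Assumption \ref{classical-wellposed}(iii). A bounded monotone function has a limit, so $v^\tau(x)\uparrow v^*(x)$ pointwise.

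For part (ii), I assume $v^\tau_x\to v^*_x$ and $v^\tau_{xx}\to v^*_{xx}$. Consider the function $\tau\mapsto \bH^\tau(x,u_\tau)$. Along the gradient flow, its derivative splits into two pieces: the term $\|\nabla_u\bH^\tau\|^2$, and a term coming from $d v^\tau_x$ and $d v^\tau_{xx}$, which Lemma \ref{dynamic v classical} controls.

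First I show $u_\tau$ stays bounded, at least along a subsequence. The hypothesis $\nabla_u\bH(x,u,v^*_x,v^*_{xx})\neq 0$ at $|u|=\infty$ is exactly what rules out the gradient flow escaping while its speed tends to zero. The argument is by contradiction: if $|u_{\tau}|\to\infty$ along the whole path, then $\nabla_u\bH^\tau(x,u_\tau)$ stays bounded away from $0$ for large $\tau$, using the convergence of $v^\tau_x$ and $v^\tau_{xx}$ and the uniform bounds on derivatives in Assumption \ref{classical-wellposed}(i). The identity $\frac{d}{d\tau}v^\tau(x)=\int_0^\tau e^{-\beta s}\|\nabla_u\bH^{\tau-s}\|^2(X^\tau_s,u_{\tau-s})\,ds$ from the proof of Lemma \ref{lemma:policy-improve-c} would then have its $s$ near $0$ contribution bounded below, since $X^\tau_s\approx x$ for small $s$ by continuity of paths. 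This is incompatible with $v^\tau$ converging and $\frac{d}{d\tau}v^\tau$ integrable.

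The same integrability gives $\int_0^\infty \frac{d}{d\tau}v^\tau\,d\tau<\infty$, so there is a sequence $\tau_k\to\infty$ with $\frac{d}{d\tau}v^{\tau_k}\to 0$. Along that sequence, $\nabla_u\bH^{\tau_k}(x,u_{\tau_k})\to 0$. By Bolzano–Weierstrass, a further subsequence gives $u_{\tau_k}\to u^*$, and continuity of $\nabla_u\bH$ in $(u,v_x,v_{xx})$ yields $\nabla_u\bH(x,u^*,v^*_x,v^*_{xx})=0$.

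The \textbf{main obstacle} is this localization step: passing from smallness of the $e^{-\beta s}$-weighted integral to smallness of the integrand at $s=0$, $y=x$. I would handle it by using the continuity in $s$ of $X^\tau_s$ together with Lipschitz bounds on $\nabla_u\bH$ (bounded second derivatives), and, if needed, choosing $\tau_k$ via a Fubini/Lebesgue-point argument.

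For the verification step, pass to the limit in the $v$-dynamics of Lemma \ref{dynamic v classical}. Since $v^\tau$ converges and $\frac{d}{d\tau}v^{\tau}\to0$ along $\tau_k$, we get $-\beta v^*(x)+\bH(x,u^*,v^*_x,v^*_{xx})=0$. The argmax hypothesis turns this into $-\beta v^*+\sup_u\bH(x,u,v^*_x,v^*_{xx})=0$, i.e.\ $v^*$ solves the HJB equation \eqref{eq:HJB}, with the needed regularity inherited from Assumption \ref{assumption-T-c}(i). Then a standard verification theorem finishes the proof. For any $u\in\mathcal A(x)$, apply Itô's formula to $e^{-\beta t}v^*(X^u_t)$. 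The HJB inequality gives $v^*(x)\ge \E[\int_0^T e^{-\beta t}f\,dt]+\E[e^{-\beta T}v^*(X^u_T)]$, where the martingale part vanishes by admissibility (i) and the integrability in Assumption \ref{assumption-T-c}(ii). Letting $T\to\infty$, with the transversality term killed by boundedness and $\beta>0$, gives $v^*\ge J(x,u)$. Equality holds for $\hat u$ because $u^*$ attains the supremum. Hence $\hat u$ is optimal and $v^*=V$.
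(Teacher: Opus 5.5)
Part (i) is correct and matches the paper. The gap is in part (ii), at the localization step you flag yourself, and the fix you sketch does not close it. The identity from the proof of Lemma~\ref{lemma:policy-improve-c} still carries an expectation over $W$ when $\lambda=0$, because $X^\tau_s$ is random. So it controls $\phi_r(y):=\|\nabla_u\bH^r(y,u_r(y))\|^2$ only through an average over the law of $X^\tau_s$. For $s>0$ that law does not charge the single point $x$. To move information between this average and $\phi_\tau(x)$ in either direction, you would need $y\mapsto u_r(y)$, $v^r_x(y)$, $v^r_{xx}(y)$ to be equicontinuous at $x$ uniformly in $r$. That means bounds on $p_r=d_xu_r$, $v^r_{xx}$, $v^r_{xxx}$ near $x$ that do not depend on $r$. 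Nothing in the hypotheses gives this. Bounded second derivatives of $f,b,\sigma\sigma^\top$ make $\nabla_u\bH$ Lipschitz in $(y,u)$ for \emph{fixed} $(v_x,v_{xx})$, but they say nothing about how steep the iterates become in space. A Fubini or Lebesgue-point choice of $\tau_k$ would at best give information for a.e.\ $y$ near $x$, not at the prescribed $x$. Both your boundedness-by-contradiction argument and your extraction of $\tau_k$ with $\nabla_u\bH^{\tau_k}(x,u_{\tau_k})\to0$ go through this step, so (ii) is not established.

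The repair is the route you mention in your second paragraph and then drop, which is the one the paper takes. Stay at the point $x$ and use the pointwise dynamics \eqref{eq:v-dynamics-c}. Set $g_\tau:=\bH(x,u_\tau,v^\tau_x(x),v^\tau_{xx}(x))$. Then $\frac{d}{d\tau}v^\tau(x)=-\beta v^\tau(x)+g_\tau$, and for $\lambda=0$, $\frac{d}{d\tau}g_\tau=\|\nabla_u\bH^\tau(x,u_\tau)\|^2+b^\top(x,u_\tau)\frac{d}{d\tau}v^\tau_x(x)+\frac12\tr\big[\sigma\sigma^\top(x,u_\tau):\frac{d}{d\tau}v^\tau_{xx}(x)\big]$, with no spatial variation anywhere. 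The paper then simply asserts that convergence of $g_\tau$, $v^\tau_x$, $v^\tau_{xx}$ forces their $\tau$-derivatives to vanish. To make that rigorous:
integrate over $[T,T+1]$;
integrate the last two terms by parts against $v^\tau_x-v^*_x$ and $v^\tau_{xx}-v^*_{xx}$, using that $b$, $\sigma$, $\nabla_u b$, $\nabla_u(\sigma\sigma^\top)$ are bounded and $\frac{d}{d\tau}u_\tau=\nabla_u\bH^\tau$;
use $\frac{d}{d\tau}v^\tau\ge0$ and $\int_0^\infty\frac{d}{d\tau}v^\tau\,d\tau<\infty$ to get windows with $\int_T^{T+1}\|\nabla_u\bH^\tau(x,u_\tau)\|^2d\tau\to0$;
inside them choose $\tau_k$ with both $\nabla_u\bH^{\tau_k}(x,u_{\tau_k})\to0$ and $\frac{d}{d\tau}v^{\tau_k}(x)\to0$, which is what the final identity $-\beta v^*+\bH(x,u^*,v^*_x,v^*_{xx})=0$ requires.

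Boundedness of $u_{\tau_k}$ along this particular sequence, needed for Bolzano--Weierstrass, then follows directly from the condition at $|u|=\infty$. It also uses the uniform-in-$u$ Lipschitz dependence of $\nabla_u\bH$ on $(v_x,v_{xx})$. Your separate argument only rules out $|u_\tau|\to\infty$ along the whole path, which does not give boundedness along $\tau_k$. Finally, in your verification sketch the transversality term cannot be removed ``by boundedness''. Since $f$ is only assumed to have bounded derivatives, $v^*$ need not be bounded, and a growth estimate is needed instead.
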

\begin{proof}{Proof.}
	(i) If $\lambda = 0$, then $u_\tau$ and thus $v^\tau$ are deterministic. Lemma \ref{lemma:policy-improve-c} implies that $v^\tau$ is non-decreasing in $\tau$, and because the value function is finite, it converges to $v^*$. 
    
    (ii) For each $\tau$, $v^\tau(t,x)$ solves
    \begin{align*}
        v_t^\tau - \beta v^\tau + b(x, u_t^\tau (x))^\top v_x^\tau + \frac{1}{2} \sigma(x, u_t^\tau(x)) \sigma(x, u_t^\tau(x))^\top :v_{xx}^\tau + f (x, u_t^\tau (x) )  =0 , \quad 0 < t < \tau. 
    \end{align*}
    By definition of \(v^\tau\), for every fixed \(t \ge 0,\)
    \begin{align*}
        v^*(t,x) = \lim_{\tau \to \infty} v^{\tau -t} (0,x) =v^*(0,x),
    \end{align*}
    therefore the limit is independent of t, and \(v^*_t = 0.\) Furthermore, since the monotone function $v^\tau(x)$ converges, \eqref{eq:u-dynamics-c} implies that $$ -\beta v^\tau(x) + f(x,u_\tau) + b^\top(x,u_\tau)v^\tau_x(x) + \frac{1}{2}\tr[\sigma\sigma^\top(x,u_\tau):v^\tau_{xx}(x)],$$ converges to $0$. In other words, for every $x$
	\begin{equation*}
		g_\tau = f(x,u_\tau) + b^\top(x,u_\tau)v^\tau_x(x)+\frac{1}{2}\tr[\sigma\sigma^\top(x,u_\tau):v^\tau_{xx}(x)],
	\end{equation*}
	converges to constant $\beta v^*(x)$.
	\begin{equation*}
		dg_\tau =\|\nabla_u \mathbf{H}^\tau(x,u_\tau)\|^2d\tau +  b^\top(x,u_\tau)dv^\tau_x(x) +\frac{1}{2}\tr[\sigma\sigma^\top(x,u_\tau):dv^\tau_{xx}(x)].
	\end{equation*}
	Since $g_\tau, v^\tau_x(x)$ and $v^\tau_{xx}(x)$ converge in $\tau$, their derivative with respect to $\tau$ should converge to $0$,
	which implies that $\nabla_u \mathbf{H}^\tau(x,u_\tau)$ converges to $0$. Since $\nabla_u(\mathbf{H}(x,u,v^*_x(x),v^*_{xx}(x)))|_{|u|=\infty} \neq 0$, $\{u_\tau\}_{\tau\geq 0}$ is bounded, and thus up to a subsequence converges to $u^*$, and by the continuity of $\mathbf{H}$, $\nabla_u (\mathbf{H}(x,u^*,v^*_x(x),v^*_{xx}(x))) =0$.

	Finally, along the converging subsequence, from \eqref{eq:v-dynamics-c},
	\begin{align*}
		0 =& \lim_{\tau\rightarrow\infty}\left(-\beta v^\tau(x) + f(x,u_\tau) + b^\top(x,u_\tau) v^\tau_x(x)+ \frac{1}{2}\tr[\sigma\sigma^\top(x,u_\tau):v^\tau_{xx}(x)]\right)\nonumber\\
		=& -\beta v^*(x) + f(x,u^*) + b^\top(x,u^*)v^*_x(x) + \frac{1}{2}\tr[\sigma\sigma^\top(x,u^*):v^*_{xx}(x)].
	\end{align*}
If $u^*(x) = \argmax\limits_u \mathbf{H}(x,u(x),v^*_x(x),v^*_{xx}(x))$, then $\sup_{u}\left(-\beta v^*(x) + \mathbf{H}(x,u,v^*_x,v^*_{xx})\right) = 0$. Thus $v^*$ satisfies the HJB equation corresponding to the control problem $V(x) = \sup\limits_{u\in\mathcal A(x)} J(x,u)$, with $u^*$ being the maximizer of the associated Hamiltonian. The verification of optimality follows standard arguments and is omitted here.
\end{proof}
The previous Proposition shows that if $\mathbf{H}$ is concave in $u$, then $u^*$ is the optimal control, e.g. if $f$ is concave and $b$ and $\sigma$ are linear in $u$, which implies that $v(x,u)$ for any $u\in\mathcal A(x)$ is concave in $x$, as discussed in Section \ref{sec:power-utility-eg}. In the case of non-concave Hamiltonian $\mathbf{H}$, with $\lambda = 0$, it is possible that $u^*$ is stuck in a local maximum of $\bH(x,u,v^*_x,v^*_{xx})$. Assuming that the policy iteration has a limit, we propose the following criterion to check for optimality, which involves a sequence of policy and value iterations. This criterion applies to any policy iteration algorithm, beyond \eqref{eq:u-dynamics-c}, for the inhomogeneous Hamiltonian coupled with value iteration \eqref{eq:v-dynamics-c}.
\begin{definition}\label{def:sequential}
 Given initial values $u_0$ and $v_0(x) = J(x,u^0)$, where $u^0 = \{u_t = u_0, t\geq 0\}$, denote as $u^{(1)}_\tau$ and $v^{(1)\tau}$ the policy and value following \eqref{eq:u-dynamics-c} $($or any other policy iteration algorithm$)$ and \eqref{eq:v-dynamics-c}, and denote as $u^{(1)}_*$ and $v^{(1)*}$ the limit of these iterations. For $i \geq 1$, if $u^{(i)}_*(x) \neq \argmax\limits_u \mathbf{H}(x,u,v^{(i)*}_x(x),v^{(i)*}_{xx}(x))$, then let $$u^{(i+1)}_0(x) = \argmax\limits_u \mathbf{H}(x,u,v^{(i)*}_x(x),v^{(i)*}_{xx}(x)),$$ and otherwise let $u^{(i+1)}_0(x) = u^{(i)}_*$. Denote as $u^{(i+1)}_\tau$ and $v^{(i+1)\tau}$ the policy and value following \eqref{eq:u-dynamics-c} $($or any other policy iteration algorithm$)$ and \eqref{eq:v-dynamics-c} with initial values $u^{(i+1)}_0(x)$ and $v^{(i)*}$, and the limits are denoted as $u^{(i+1)}_*$ and $v^{(i+1)*}$.
\end{definition}
According to the above definition, if the limit of the policy iteration $u^{(i)}_*$ is the maximizer of the Hamiltonian corresponding to the limiting value $v^{(i)*}$ for every $x$ (so that it is the optimal control), then the iteration simply stops. If not, then the optimality fails, because we can choose an initial value $u^{(i+1)}_0$ which gives a higher value of the same Hamiltonian, and restart the iteration. The next proposition shows the policy improvement holds in the above sequential iterations.
\begin{proposition}\label{prop:control-update-classic}
For any $i\geq 1$, if $u^{(i+1)}_0 \neq u^{(i)}_*$, then $dv^{(i+1)\tau}(x)|_{\tau = 0} >0$. In particular, if the policy iteration follows \eqref{eq:u-dynamics-c} with $\lambda= 0$, then $v^{(i+1)\tau_j} > v^{(i)\tau_i}$, where $\tau_i,\tau_j\in[0,\infty]$.
\end{proposition}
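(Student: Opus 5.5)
The proof rests on the value dynamics of Lemma \ref{dynamic v classical}, evaluated at $\tau=0$ for the $(i+1)$-th iteration, combined with the limiting relation from the proof of Proposition \ref{prop:limit-det}. At the restart the value is $v^{(i+1)0}=v^{(i)*}$, so its derivatives are $v^{(i)*}_x$ and $v^{(i)*}_{xx}$, and the control is $u^{(i+1)}_0$. Hence
\begin{equation*}
\frac{dv^{(i+1)\tau}(x)}{d\tau}\Big|_{\tau=0} = -\beta v^{(i)*}(x) + \mathbf{H}\big(x,u^{(i+1)}_0(x),v^{(i)*}_x(x),v^{(i)*}_{xx}(x)\big).
\end{equation*}
From the last display in the proof of Proposition \ref{prop:limit-det}, at the limit of the $i$-th iteration we have $-\beta v^{(i)*}(x)+\mathbf{H}(x,u^{(i)}_*,v^{(i)*}_x,v^{(i)*}_{xx})=0$. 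Subtracting gives
\begin{equation*}
\frac{dv^{(i+1)\tau}(x)}{d\tau}\Big|_{\tau=0} = \mathbf{H}\big(x,u^{(i+1)}_0,v^{(i)*}_x,v^{(i)*}_{xx}\big)-\mathbf{H}\big(x,u^{(i)}_*,v^{(i)*}_x,v^{(i)*}_{xx}\big).
\end{equation*}
By Definition \ref{def:sequential}, $u^{(i+1)}_0\neq u^{(i)}_*$ happens only when $u^{(i)}_*$ is not the argmax, and then $u^{(i+1)}_0$ is the argmax. So the difference is strictly positive, assuming the argmax is unique, or more precisely that $u^{(i)}_*$ fails to be a maximizer. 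This proves the first claim.

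For the second claim, with $\lambda=0$ everything is deterministic. Lemma \ref{lemma:policy-improve-c}(i) gives that $\tau\mapsto v^{(k)\tau}$ is non-decreasing within each stage, and Proposition \ref{prop:limit-det}(i) gives convergence to $v^{(k)*}$. Consequently $v^{(i)\tau_i}\le v^{(i)*}=v^{(i+1)0}$ for every $\tau_i\in[0,\infty]$. The strictly positive derivative at $\tau=0$ gives $v^{(i+1)\tau}>v^{(i+1)0}$ for small $\tau>0$, and monotonicity propagates this strict inequality to all $\tau_j\in(0,\infty]$. Chaining these yields $v^{(i+1)\tau_j}>v^{(i)\tau_i}$. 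The endpoint case $\tau_j=0$ is strict only when $\tau_i<\infty$ and the $i$-th stage is strictly increasing, so I would state the claim for $\tau_j>0$.

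The main obstacle is justifying the identity $-\beta v^{(i)*}+\mathbf{H}(x,u^{(i)}_*,v^{(i)*}_x,v^{(i)*}_{xx})=0$ at the limit. This requires convergence of $v^\tau_x$ and $v^\tau_{xx}$ and of $u_\tau$, as assumed in Proposition \ref{prop:limit-det}(ii), together with continuity of $\mathbf{H}$. A secondary point is that Lemma \ref{lemma:policy-improve-c} is stated for a start with $v_0=J(x,u^0)$, whereas here the restart value $v^{(i)*}$ is not the value of the stationary control $u^{(i+1)}_0$. I would handle this by noting that $v^{(i)*}$ is the value of the limiting stationary policy $u^{(i)}_*$. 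Then $v^{(i+1)\tau}$ is the value of the concatenated strategy, namely $u^{(i+1)}_{\tau-t}$ on $[0,\tau]$ followed by the previous limit policy, and the proof of Lemma \ref{lemma:policy-improve-c} carries over verbatim with that tail.
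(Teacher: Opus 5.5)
Your proposal is correct and follows the paper's proof. You evaluate \eqref{eq:v-dynamics-c} at $\tau=0$ using $v^{(i+1)0}=v^{(i)*}$, subtract the limit identity $-\beta v^{(i)*}+\mathbf{H}(x,u^{(i)}_*,v^{(i)*}_x,v^{(i)*}_{xx})=0$, invoke the argmax choice of Definition~\ref{def:sequential}, and chain with within-stage monotonicity from Lemma~\ref{lemma:policy-improve-c}. Your caveats are valid refinements that the paper skips: the case $\tau_j=0$, $\tau_i=\infty$ gives only equality, and the lemma must be re-derived because the restarted initial value is $v^{(i)*}$ rather than $J(x,u^{(i+1)}_0)$. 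That re-derivation is not quite verbatim, however. In the concatenated strategy, the jump from $u^{(i+1)}_0$ to the tail $u^{(i)}_*$ adds the term $e^{-\beta\tau}\mathbb{E}\big[\mathbf{H}(X^\tau_\tau,u^{(i+1)}_0,v^{(i)*}_x,v^{(i)*}_{xx})-\mathbf{H}(X^\tau_\tau,u^{(i)}_*,v^{(i)*}_x,v^{(i)*}_{xx})\big]$ to $dv^{(i+1)\tau}/d\tau$. This term is nonnegative exactly because $u^{(i+1)}_0$ maximizes $\mathbf{H}$ against $v^{(i)*}$ at every state, so your conclusion survives once you state this.
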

\begin{proof}{Proof.}
Since $v^{(i+1)0} = v^{(i)*}$, 
\begin{align*}
	&dv^{(i+1)\tau}(x)|_{\tau = 0} \nonumber\\
    =& \left(-\beta v^{(i)*}(x) + f(x,u^{(i+1)}_0) + b^\top(x,u^{(i+1)}_0) v^{(i)*}_x(x) + \frac{1}{2}\tr[\sigma\sigma^\top(x,u^{(i+1)}_0):v^{(i)*}_{xx}(x)]\right)d\tau.
\end{align*}
By Definition \ref{def:sequential},
$$\mathbf{H}(x,u^{(i+1)}_0(x),v^{(i)*}_x(x),v^{(i)*}_{xx}(x)) > \mathbf{H}(x,u^{(i)*}(x),v^{(i)*}_x(x),v^{(i)*}_{xx}(x)).$$
Thus,
\begin{equation*}
    \begin{aligned}
    &dv^{(i+1)\tau}(x)|_{\tau = 0}\\
    &\quad > \left(-\beta v^{(i)*}(x) + f(x,u^{(i)}_*) + b^\top(x,u^{(i)}_*)v^{(i)*}_x(x) + \frac{1}{2}\tr[\sigma\sigma^\top(x,u^{(i)}_*):v^{(i)*}_{xx}]\right)d\tau.
    \end{aligned}
\end{equation*}
On the other hand, since $(u^{(i)}_*, v^{(i)*})$ is the limit of the $i$-th round of iteration,
$$-\beta v^{(i)*}(x) + f(x,u^{(i)}_*) + b^\top(x,u^{(i)}_*)v^{(i)*}_x(x) + \frac{1}{2}\tr[\sigma\sigma^\top(x,u^{(i)}_*):v^{(i)*}_{xx}] = 0,$$ which implies that $dv^{(i+1)\tau}(x)|_{\tau = 0} >0$.
If the policy iteration follows \eqref{eq:u-dynamics-c} with $\lambda=0$, Lemma \ref{lemma:policy-improve-c} implies that $v^{(i)\tau_1} \leq v^{(i)\tau_2}$ for any $i\geq 1$ and $\tau_1 < \tau_2 \leq \infty$. Thus in order to prove the claim, it suffices to check $dv^{(i+1)\tau}>0$ at $\tau = 0$ as above, because that is the only instance where we manually choose the drift, in particular the control $u$ in the Hamiltonian $\mathbf{H}$, of the value iteration, which otherwise should be the current iterates for $u$, according to \eqref{eq:u-dynamics-c}.
\end{proof}

\section{Convergence analysis}\label{sec: convergence}
In this section, we discuss the convergence rate of the continuous policy \eqref{eq:u-dynamics-c} and value \eqref{eq:v-dynamics-c} dynamics proposed in the previous section with temperature $\lambda >0$, under coupling bound assumptions. Because \(u_\tau\) is iteslef a function of \(x\), we define the following quantities for computational purpose,
    \begin{align*}
        p_\tau := d_x u_\tau(x) \in \mathbb{R}^{n \times d}, \quad q_\tau := d^2_{xx} u_\tau(x). 
    \end{align*}
    Then rewrite the iterations as following 
    \begin{align}
        d_\tau v_x^\tau &= -\beta v_x^\tau + [d_x \mathbf{H}]_u + p_\tau^\top \nabla_u \mathbf{H}\label{eq:v_x-with-pq}\\
        d_\tau v_{xx}^\tau &= -\beta v^\tau_{xx} + [d^2_{xx}\mathbf{H}]_u +2 sym\left [p_\tau^\top [d_x \nabla_u \mathbf{H}]_u\right] + p^\top_\tau (\nabla_{uu}^2 \mathbf{H}) p_\tau + q^\top_\tau \nabla_u \mathbf{H}.\label{eq:v_xx-with-pq}
    \end{align}
where \([\cdot]_u\) is the operation by fixing policy \(u.\) and $p_\tau$ and $q_\tau$ should have dynamics on their own. Differentiating \eqref{eq:u-dynamics-c} in $x$ gives
    \begin{align}
    \partial_\tau p_\tau&=\big(\nabla^2_{uu}\mathbf{H}\big)p_\tau
    +\big[\partial_x\nabla_u\mathbf{H}\big]_{u},\label{eq:p-dyn}\\
    \partial_\tau q_\tau&=\big(\nabla^2_{uu}\mathbf{H}\big)q_\tau
    +\big[\partial^2_{xx}\nabla_u\mathbf{H}\big]_{u}
    +2sym\!\Big[\big[\partial_x\nabla^2_{uu}\mathbf{H}\big]_{u}p_\tau\Big]
    +p_\tau^\top\big(\nabla^3_{uuu}\mathbf{H}\big)p_\tau.\label{eq:q-dyn}
    \end{align}
$\bW^\tau:=\big(v^\tau(x),\,v^\tau_x(x),\,v^\tau_{xx}(x),\,p_\tau,\,q_\tau\big) \in \mathbb{R}^N$ where \(N = 1+ d + d^2 + nd +nd^2\), and the norm $\|\bW\|^2:=|v|^2+\|v_x\|^2+\|v_{xx}\|_F^2+\|p\|^2+\|q\|^2$,
we group these iteration dynamics as
\begin{equation}\label{eq:Z}
\begin{cases}
\;du_\tau=\nabla_u\bH(x,u_\tau,\bW^\tau)\,d\tau+\sqrt{2\lambda}\,dB_\tau,\\[2pt]
\;d\bW^\tau=\big(-\beta \bW^\tau+\cG(x,u_\tau,\bW^\tau)\big)\,d\tau ,
\end{cases}
\end{equation}
where
\begin{align*}
\mathcal{G}(x, u, \bW) = \begin{bmatrix} 
H(x, u, v_x, v_{xx}) \\ 
[\partial_x H]_u + p^\top \nabla_u H \\ 
[\partial_{xx}^2 H]_u + 2\text{sym}[p^\top [\partial_x \nabla_u H]_u] + p^\top (\nabla_{uu}^2 H) p + q^\top \nabla_u H \\ 
(\beta I + \nabla_{uu}^2 H)p + [\partial_x \nabla_u H]_u \\ 
(\beta I + \nabla_{uu}^2 H)q + [\partial_{xx}^2 \nabla_u H]_u + 2\text{sym}[[\partial_x \nabla_{uu}^2 H]_u p] + p^\top (\nabla_{uuu}^3 H) p 
\end{bmatrix}.
\end{align*}
Notice that the system of \(\bW^\tau\) may not be a closed, because differentiating the value equation once and twice in space introduces \(v_{xxx}\) and \(v_{xxxx}\) respectively. In the following we show that for sufficiently smooth policy and value along the coupled iteration, \(v_{xxx}\) and \(v_{xxxx}\) are uniformly bounded in \(L^2\) and prevent the higher derivatives from growing without control as the iteration proceeds.

To establish convergence of the coupled policy–value iteration, the following quantitative coupling conditions are imposed. These assumptions provide (one-sided) Lipschitz-type bounds that control the dependence of the policy and value dynamics on each other. Notice that these conditions and the discussions below do not depend on the temperature $\lambda$ being a constant, and applies with annealed $\lambda$ discussed later.
\begin{assumption}\label{assum:coupling-bounds}
For every $x\in\R^d$, there exist $\kappa(x)>0$,
$L_{VV}(x)\in\R$, $L_{uV}(x)>0$, and $L_{Vu}(x)\ge0$ such that for all $u,\tilde u\in\R^n$ and
$\bW,\tilde\bW$,
\begin{align}
(\mathrm{H1})\quad& \mathbb{E}\big\langle\cG(x,u,\bW)-\cG(x,u,\tilde \bW),\,\bW-\tilde \bW\big\rangle
\le L_{VV}(x) \mathbb{E}\|\bW-\tilde \bW\|^2;\tag{H1}\label{eq:H1}\\
(\mathrm{H2})\quad&\mathbb{E}\big\|\cG(x,u,\bW)-\cG(x,\tilde u,\bW)\big\|^2\le L_{uV}(x)^2 \mathbb{E}\|u-\tilde u\|^2;\tag{H2}
\label{eq:H2}\\
(\mathrm{H3})\quad& \mathbb{E}\left\langle\nabla_u\bH(x,u,\bW)-\nabla_u\bH(x,\tilde u,\bW),\,u-\tilde u\right\rangle
\le-\kappa(x) \mathbb{E}\|u-\tilde u\|^2;\tag{H3}\label{eq:H3}\\
(\mathrm{H4})\quad&\mathbb{E}\big\|\nabla_u\bH(x,u,\bW)-\nabla_u\bH(x,u,\tilde \bW)\big\|^2
\le L_{Vu}(x)^2\mathbb{E}\|\bW-\tilde \bW\|^2.\tag{H4}\label{eq:H4}
\end{align}
\end{assumption}

For \(c > 0,\) define the weighted norm on \(\mathbb{R}^n \times \mathbb{R}^N\) by \(\|(u,\mathbf W)\|_c^2:=\|u\|^2+c\|\mathbf W\|^2.\)
Denote by
$\mathcal P_{2}(\mathbb R^{n} \times \mathbb{R}^N)$ the space of Borel probability measures with
finite second moment, equipped with the $2$-Wasserstein distance
\begin{align}
\mathcal W_{2,c}(\mu,\nu)
=\Big(\inf_{\gamma\in\Pi(\mu,\nu)}
\int_{  \mathbb R^{n}\times\mathbb R^{N}}\left (\|u-\tilde{u}\|^{2}+c\|\mathbf W-\widetilde{\mathbf W}\|^2
\right )d\gamma \Big)^{1/2},
\end{align}
where $\Pi(\mu,\nu)$ denotes the set of couplings of $\mu,\nu$ (c.f.
\citet{Villani2009,AmbrosioGigliSavare2008}). Under the above assumptions, we first show the exponential convergence of the continuous dynamics \eqref{eq:Z},
\begin{theorem}\label{thm:joint-contraction}
Fix $x\in\R^d$ and let Assumption \ref{assum:coupling-bounds} hold. 
Let $Z_\tau=(u_\tau,\bW^\tau)$ and $\tilde Z_\tau=(\tilde u_\tau,\tilde \bW^\tau)$ solve
\eqref{eq:Z} driven by the same Brownian motion $B$, from possibly random initial
data $Z_0,\tilde Z_0$. Define 
\begin{equation}\label{eq:alpha}
\Phi_\tau:=\|u_\tau-\tilde u_\tau\|^2+\alpha^*\|\bW^\tau-\tilde \bW^\tau\|^2,
\end{equation}
where $\alpha^*:=\frac{L_{Vu}(x)}{L_{uV}(x)}$.
If the following assumption hold:
\begin{align}
 &\kappa(x)\big(\beta-L_{VV}(x)\big)>L_{uV}(x)\,L_{Vu}(x);\label{eq:smallgain}\\
 &\sup_{0 \le \tau \le T} \mathbb{E}\left[  \left\lVert u_\tau\right\rVert^2 + \left\lVert \bW^\tau\right\rVert^2 + \left\lVert \nabla_u H(x,u_\tau, \bW^\tau)\right\rVert^2 + \left\lVert -\beta \bW^\tau + \cG(x, u_\tau, \bW^\tau)\right\rVert ^2   \right] < \infty. \label{ass:bouned-all}
\end{align}
Then, almost surely and for every $\tau\ge0$,
\begin{equation}\label{eq:contraction}
\Phi_\tau\le e^{-2\rho^*(x)\tau}\Phi_0,\text{ with }
\rho^*:=\frac{\kappa+\beta - L_{VV}}{2}-\sqrt{\Big(\frac{\kappa-\beta + L_{VV}}{2}\Big)^{2}+L_{uV}L_{Vu}}\;>\;0 .
\end{equation}
Consequently, if $\cL_\tau,\tilde\cL_\tau$ denote the laws of $Z_\tau,\tilde Z_\tau$ on
$\R^n\times \R^N$, the Wasserstein distance satisfies
\begin{equation}\label{eq:W2}
\cW_{2,\alpha^*}\big(\cL_\tau,\tilde\cL_\tau\big)
\le e^{-\rho^*(x)\tau}\,\cW_{2,\alpha^*}\big(\cL_0,\tilde\cL_0\big).
\end{equation}
    
\end{theorem}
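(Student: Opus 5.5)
The plan is a synchronous coupling argument: since both copies are driven by the same Brownian motion $B$ with the same constant $\lambda$, the noise cancels in the differences. Set $\delta u_\tau := u_\tau-\tilde u_\tau$ and $\delta\bW_\tau:=\bW^\tau-\tilde\bW^\tau$. These satisfy the random ODE system
\begin{align*}
\frac{d}{d\tau}\delta u_\tau &= \nabla_u\bH(x,u_\tau,\bW^\tau)-\nabla_u\bH(x,\tilde u_\tau,\tilde\bW^\tau),\\
\frac{d}{d\tau}\delta \bW_\tau &= -\beta\,\delta\bW_\tau+\cG(x,u_\tau,\bW^\tau)-\cG(x,\tilde u_\tau,\tilde\bW^\tau),
\end{align*}
with no stochastic integral and no It\^o correction. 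Condition \eqref{ass:bouned-all} is used only to justify differentiating $\Phi_\tau$ (and its expectation) in $\tau$ and exchanging $d/d\tau$ with $\mathbb E$, since it gives square integrability of the drifts.

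Next compute $\frac{d}{d\tau}\Phi_\tau$ and split each drift difference by inserting the intermediate points $(\tilde u_\tau,\bW^\tau)$ and $(u_\tau,\tilde\bW^\tau)$. For the $u$-part,
\[
\langle \delta u,\nabla_u\bH(u,\bW)-\nabla_u\bH(\tilde u,\bW)\rangle + \langle\delta u,\nabla_u\bH(\tilde u,\bW)-\nabla_u\bH(\tilde u,\tilde\bW)\rangle ,
\]
where \eqref{eq:H3} bounds the first term by $-\kappa\|\delta u\|^2$, and Cauchy--Schwarz with \eqref{eq:H4} bounds the second by $L_{Vu}\|\delta u\|\|\delta\bW\|$. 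The $\bW$-part is handled the same way: \eqref{eq:H1} gives $(L_{VV}-\beta)\|\delta\bW\|^2$, and \eqref{eq:H2} with Cauchy--Schwarz gives $L_{uV}\|\delta u\|\|\delta\bW\|$. This yields
\[
\tfrac12\tfrac{d}{d\tau}\Phi_\tau\le -\kappa\|\delta u\|^2-\alpha^*(\beta-L_{VV})\|\delta\bW\|^2+(L_{Vu}+\alpha^*L_{uV})\|\delta u\|\|\delta\bW\|,
\]
which is a quadratic form in $(a,b)=(\|\delta u\|,\sqrt{\alpha^*}\|\delta\bW\|)$. With $\alpha^*=L_{Vu}/L_{uV}$, the cross coefficient becomes $(L_{Vu}+\alpha^*L_{uV})/\sqrt{\alpha^*}=2\sqrt{L_{uV}L_{Vu}}$. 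The form is therefore $-\big(\kappa a^2+(\beta-L_{VV})b^2-2\sqrt{L_{uV}L_{Vu}}\,ab\big)$. The smallest eigenvalue of the matrix $\begin{pmatrix}\kappa&-\sqrt{L_{uV}L_{Vu}}\\-\sqrt{L_{uV}L_{Vu}}&\beta-L_{VV}\end{pmatrix}$ is exactly $\rho^*$. The small-gain condition \eqref{eq:smallgain}, together with $\kappa>0$, makes this matrix positive definite, so $\rho^*>0$. Hence $\frac{d}{d\tau}\Phi_\tau\le-2\rho^*\Phi_\tau$, and Gr\"onwall gives \eqref{eq:contraction}.

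The main obstacle is that \eqref{eq:H1}--\eqref{eq:H4} are stated only \emph{in expectation}, whereas the claim is almost sure. Run pathwise, the argument would need the pointwise versions. With only the expectation versions, one applies them to the random pair $(Z_\tau,\tilde Z_\tau)$ and works with $\mathbb E\Phi_\tau$, using $\mathbb E[\|\delta u\|\|\delta\bW\|]\le(\mathbb E\|\delta u\|^2)^{1/2}(\mathbb E\|\delta\bW\|^2)^{1/2}$. The same quadratic-form argument then gives $\mathbb E\Phi_\tau\le e^{-2\rho^*\tau}\mathbb E\Phi_0$. I would present the argument at the level of $\mathbb E\Phi_\tau$, noting that the almost sure statement holds whenever the bounds hold pointwise, and flag this issue. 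Either version suffices for \eqref{eq:W2}.

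For the Wasserstein bound, take $(Z_0,\tilde Z_0)$ distributed as an optimal coupling of $(\cL_0,\tilde\cL_0)$ for $\cW_{2,\alpha^*}$, independent of $B$. Then $(Z_\tau,\tilde Z_\tau)$ is a coupling of $(\cL_\tau,\tilde\cL_\tau)$, so $\cW_{2,\alpha^*}^2(\cL_\tau,\tilde\cL_\tau)\le\mathbb E\Phi_\tau\le e^{-2\rho^*\tau}\cW^2_{2,\alpha^*}(\cL_0,\tilde\cL_0)$. Taking square roots gives \eqref{eq:W2}.
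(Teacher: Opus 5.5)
Your proposal is correct and is essentially the paper's argument. Both use synchronous coupling, split each drift difference at an intermediate point, and apply (H3)/(H4) and (H1)/(H2) with Cauchy--Schwarz. Both then bound a quadratic form whose smallest eigenvalue is $\rho^*$; the paper phrases this as the smaller root of $(\kappa-\rho)(\beta-L_{VV}-\rho)=L_{uV}L_{Vu}$ together with the choice $C_{\alpha^*}^2/(4\alpha^*)=L_{uV}L_{Vu}$. Gr\"onwall and a coupling argument for $\cW_{2,\alpha^*}$ finish the proof in both.

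Your caveat about the almost-sure claim is well founded. The paper's proof likewise only controls $\frac{d}{d\tau}\mathbb E\Phi_\tau$ under the in-expectation hypotheses and then drops the expectation. What it actually establishes is $\mathbb E\Phi_\tau\le e^{-2\rho^*\tau}\mathbb E\Phi_0$, which is all that \eqref{eq:W2} needs.
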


\begin{proof}{Proof.}
Denote as $m_1:=\kappa(x)$, $m_2:=\beta-L_{VV}(x)$ and $g:=L_{uV}(x)L_{Vu}(x)$.
    The couplings are driving by the same Brownian motion \(B\) and the diffusion coefficient of \(u\) is constant, therefore the difference solves an ordinary differential equation (ODE) with random coefficients
    \begin{align*}
        \frac{d}{d \tau} (u_\tau- \tilde{u}_\tau) = \nabla_u H(x, u_\tau, \bW^\tau)- \nabla_u H(x, \tilde{u}_\tau , \tilde{\bW}^\tau).
    \end{align*}
    By using \eqref{eq:H3} and \eqref{eq:H4} and Cauchy-Schwartz inequality, the coupling difference of policy iteration can be bounded as following,
    \begin{align*}
        \frac{d}{d\tau} \mathbb{E}\left\lVert u_\tau - \tilde{u}_\tau \right\rVert^2  &= 2 \mathbb{E}\left\langle u_\tau - \tilde{u}_\tau , \frac{d}{d \tau} (u_\tau - \tilde{u}_\tau) \right\rangle\\
        & = 2 \mathbb{E}\left\langle u_\tau - \tilde{u}_\tau , \nabla_u H(x, u_\tau, \bW^\tau)- \nabla_u H(x, \tilde{u}_\tau , \tilde{\bW}^\tau) \right\rangle\\
        & = 2 \mathbb{E}\left\langle u_\tau - \tilde{u}_\tau , \nabla_u H(x, u_\tau, \bW^\tau)- \nabla_u H(x, \tilde{u}_\tau , \bW^\tau) \right\rangle \\
        &+ 2 \mathbb{E}\left\langle u_\tau - \tilde{u}_\tau , \nabla_u H(x, \tilde{u}_\tau, \bW^\tau)- \nabla_u H(x, \tilde{u}_\tau , \tilde{\bW}^\tau) \right\rangle\\
        &\le -2 m_1 \mathbb{E}\left\lVert u_\tau - \tilde{u}_\tau \right\rVert^2 + 2\mathbb{E}  \left[\left\lVert u_\tau - \tilde{u}_\tau \right\rVert \left\lVert \nabla_u H(x, u_\tau, \bW^\tau)- \nabla_u H(x, \tilde{u}_\tau , \bW^\tau) \right\rVert \right]\\
        &\le -2 m_1 \mathbb{E}\left\lVert u_\tau - \tilde{u}_\tau \right\rVert^2 \\& \quad+ 2(\mathbb{E}  \left\lVert u_\tau - \tilde{u}_\tau \right\rVert ^2)^{\frac{1}{2}} (\mathbb{E}\left\lVert \nabla_u H(x, u_\tau, \bW^\tau)- \nabla_u H(x, \tilde{u}_\tau , \bW^\tau) \right\rVert^2)^{\frac{1}{2}} \\\
        &\leq -2 m_1 \mathbb{E}\left\lVert u_\tau - \tilde{u}_\tau \right\rVert^2 + 2 L_{Vu} \left(\mathbb{E}  \left\lVert u_\tau - \tilde{u}_\tau \right\rVert ^2\right)^{\frac{1}{2}} \left(\mathbb{E} \left\lVert \bW^\tau - \tilde{\bW}^\tau \right\rVert ^2\right)^{\frac{1}{2}}.
    \end{align*}
    Similarly, by using \eqref{eq:H1} and \eqref{eq:H2} and the dynamic of \(\bW\) from \eqref{eq:Z},
    \begin{align*}
        &\frac{d}{d\tau} \mathbb{E}\left\lVert \bW ^\tau- \tilde{\bW}^\tau \right\rVert^2 =  2 \mathbb{E}\left\langle\bW^\tau - \tilde{\bW}^\tau, -\beta(\bW^\tau - \tilde{\bW}^\tau) + \cG(u_\tau,\bW^\tau) - \cG (\tilde{u}_\tau, \tilde{\bW}^\tau)\right\rangle\\
        &= -2 \beta  \mathbb{E}\left\lVert \bW^\tau - \tilde{\bW}^\tau \right\rVert^2 + 2 \mathbb{E}\left\langle \bW^\tau -\tilde{\bW}^\tau, \cG(u_\tau, \bW^\tau) - \cG(u_\tau, \tilde{\bW}^\tau)\right\rangle \\
        & +2 \mathbb{E}\left\langle \bW^\tau -\tilde{\bW}^\tau, \cG(u_\tau, \tilde{\bW}^\tau) - \cG(\tilde{u}_\tau, \tilde{\bW}^\tau)\right\rangle\\
        &\leq -2 \beta  \mathbb{E}\left\lVert \bW^\tau - \tilde{\bW}^\tau \right\rVert^2 +2L_{VV} \mathbb{E}\left\lVert \bW^\tau - \tilde{\bW}^\tau \right\rVert^2 + 2L_{uV} \left\lVert u_\tau - \tilde{u}_\tau\right\rVert\mathbb{E}\left\lVert \bW^\tau - \tilde{\bW}^\tau \right\rVert\\
        &\leq -2 \beta  \mathbb{E}\left\lVert \bW^\tau - \tilde{\bW}^\tau \right\rVert^2 +2L_{VV} \mathbb{E}\left\lVert \bW^\tau - \tilde{\bW}^\tau \right\rVert^2  + 2L_{uV} \left(\mathbb{E}\left\lVert u_\tau - \tilde{u}_\tau\right\rVert^2\right)^{\frac{1}{2}}\left(\mathbb{E}\left\lVert \bW^\tau - \tilde{\bW}^\tau \right\rVert^2\right)^{\frac{1}{2}} \\
        &= -2m_2 \mathbb{E}\left\lVert \bW^\tau - \tilde{\bW}^\tau \right\rVert^2 + 2L_{uV} \left(\mathbb{E}\left\lVert u_\tau - \tilde{u}_\tau\right\rVert^2\right)^{\frac{1}{2}}\left(\mathbb{E}\left\lVert \bW^\tau - \tilde{\bW}^\tau \right\rVert^2 \right)^\frac{1}{2}.
    \end{align*}
    For arbitrary \(\alpha >0\), set \(\Phi := \left\lVert u - \tilde{u} \right\rVert^2 + \alpha \left\lVert \bW - \tilde{\bW} \right\rVert^2\) and \(C_\alpha = L_{Vu} + \alpha L_{uV}.\) Then above inequalities implies that for any $\rho^* >0$,
    \begin{align*}
        &\frac{d}{d \tau} \mathbb{E}\Phi_\tau \leq -2m_1 \mathbb{E}\left\lVert u_\tau - \tilde{u}_\tau \right\rVert^2 - 2\alpha m_2 \mathbb{E}\left\lVert \bW ^\tau- \tilde{\bW}^\tau \right\rVert^2 + 2 C_\alpha \left(\mathbb{E}\left\lVert u_\tau - \tilde{u}_\tau \right\rVert^2\right)^{\frac{1}{2}} \left( \mathbb{E}\left\lVert \bW ^\tau- \tilde{\bW}^\tau \right\rVert \right)^{\frac{1}{2}}\\
        &=-2 \rho^* \mathbb{E}\Phi_\tau \\
        &- 2\left((m_1 - \rho^*) \mathbb{E}\left\lVert u_\tau - \tilde{u}_\tau \right\rVert^2 - C_\alpha \left(\mathbb{E}\left\lVert u_\tau - \tilde{u}_\tau \right\rVert^2\right)^{\frac{1}{2}} \left( \mathbb{E}\left\lVert \bW ^\tau- \tilde{\bW}^\tau \right\rVert \right)^{\frac{1}{2}}+ \alpha(m_2 - \rho^*) \left\lVert \bW ^\tau- \tilde{\bW}^\tau \right\rVert^2\right)
    \end{align*}
    We choose $\rho^* = \frac{m_1 +m_2}{2} - \sqrt{\frac{(m_1 -m_2)^2}{4} + g}$, so that the assumption that \(m_1 m_2 > g\) implies that $\rho^*>0$. Furthermore, it is the smaller root of \((m_1 - \rho) (m_2 -\rho) = g\), so that \(\rho^* < m_1 \wedge m_2\). Finally, choose $\alpha = \alpha^* = \frac{L_{Vu}}{L_{uV}}$, so that $\frac{C_{\alpha^*}^2}{ 4 \alpha^*} = L_{Vu} L_{uV} = g$, which implies that the quadratic form 
    \begin{align*}
        (m_1 - \rho^*) \mathbb{E}\left\lVert u_\tau - \tilde{u}_\tau \right\rVert^2 - C_{\alpha^*} \left(\mathbb{E}\left\lVert u_\tau - \tilde{u}_\tau \right\rVert^2\right)^{\frac{1}{2}} \left( \mathbb{E}\left\lVert \bW ^\tau- \tilde{\bW}^\tau \right\rVert \right)^{\frac{1}{2}}+ \alpha^*(m_2 - \rho^*) \left\lVert \bW ^\tau- \tilde{\bW}^\tau \right\rVert^2 \geq 0.
    \end{align*}
   Thus \(\frac{d}{d\tau}\Phi_\tau \leq - 2 \rho^* \Phi.\) Assumption \eqref{ass:bouned-all} is a sufficient condition of following
   \begin{align}
    &\mathbb{E} \Phi_0^1 < \infty;\label{ass:addtional-ass1}\\
    &\mathbb{E} \int_0^T \left[  \left\vert  \frac{d}{d\tau} \left\lVert u_\tau -\tilde{u}_\tau\right\rVert^2\right\vert + \left\vert  \frac{d}{d\tau} \left\lVert W^\tau -\tilde{W}^\tau\right\rVert^2\right\vert\right] d \tau < \infty \text{ for every } T>0.\label{ass:addtional-ass2}
   \end{align}
   For \eqref{ass:addtional-ass1}, using the inequality \(\left\lVert z - \tilde{z}\right\rVert \le 2 \left\lVert z\right\rVert^2 + 2 \left\lVert \tilde{z}\right\rVert^2\),
    \begin{align*}
        \mathbb{E}\Phi_0^1 = \mathbb{E}\left[\left\lVert u_0 - \tilde{u}_0\right\rVert^2 + \left\lVert \bW^0 -\tilde{\bW}^0\right\rVert^2 \right] \le \mathbb{E}\left\lVert u_0\right\rVert^2 + \mathbb{E} \left\lVert  \tilde{u}_0\right\rVert^2 + \mathbb{E} \left\lVert \bW ^0\right\rVert^2 + \mathbb{E} \left\lVert \tilde{\bW}^0\right\rVert^2 < \infty.
    \end{align*}
    Similarly \eqref{ass:addtional-ass2} is bounded with \(2ab< a^2 + b^2\) as following 
    \begin{align*}
        &\left\lvert \frac{d}{d\tau} \left\lVert u_\tau- \tilde{u}_\tau\right\rVert^2\right\rvert + \left\lvert \frac{d}{d\tau}\left\lVert \bW^\tau -\tilde{\bW}^\tau \right\rVert^2\right\rvert \le \Phi^1_\tau + \left\lVert \nabla_u H(x, u_\tau,\bW^\tau)- \nabla_uH(x,\tilde{u}_\tau, \tilde{W}^\tau \right\rVert^2 \nonumber\\
        &+\left\lVert \bW^\tau - \tilde \bW^\tau\right\rVert^2+\left\lVert -\beta (\bW^\tau - \tilde{\bW}^\tau) + \cG(x,u_\tau, \bW^\tau) - \cG(x,\tilde{u}_\tau, \tilde{\bW}^\tau)\right\rVert^2.
    \end{align*} 
   With assumption \eqref{ass:addtional-ass1} and \eqref{ass:addtional-ass2} ,Gronwall’s inequality yields \(\Phi_\tau \leq e^{-2\rho^* \tau} \Phi_0\).

    Let \(\mathcal{L}_0 , \tilde{\mathcal{L}_0}\) denote the probability distribution of the processes \((u_0, \bW^0)\) and \((\tilde{u}_0, \tilde{\bW}^0)\), and denote as $\Pi(\mathcal{L}_0, \tilde{\mathcal{L}}_0)$ the set of coupling of two laws. From above with definition of Wasserstein-2 distance, 
    \begin{align*}
        \cW_{2, \alpha^*}^2 (\mathcal{L}_\tau, \tilde{\mathcal{L}_\tau }) \leq \mathbb{E}\left\lVert (u_\tau, \bW^\tau) - (\tilde{u}_\tau, \tilde{\bW}^\tau)\right\rVert_{\alpha^*}^2  \leq e^{-2 \rho^* \tau} \mathbb{E}\left\lVert (u_0, \bW^0) - (\tilde{u}_0, \tilde{\bW}^0) \right\rVert_{\alpha^*}^2.    \end{align*}
    Since this holds for every \(\pi_0 \in \Pi(\mathcal{L}_0, \tilde{\mathcal{L}}_0), \) therefore 
    \begin{align*}
        \cW^2_{2, \alpha^*}(\mathcal{L}_\tau, \tilde{\mathcal{L}}_\tau) \leq e^{-2 \rho^{*} \tau} \inf_{\pi_0 \in \Pi(\mathcal{L}_0, \tilde{\mathcal{L}}_0)} \mathbb{E}_{\pi_0 } \left\lVert (u_0, \bW^0) - (\tilde{u}_0, \tilde{\bW}^0) \right\rVert_{\alpha^*}^2 = e^{-2 \rho^{*} \tau} \cW _{2, \alpha^*}^2 (\mathcal{L}_0, \tilde{\mathcal{L}}_0),
    \end{align*}
    which yields the desired bound $\cW_{2, \alpha^*}(\mathcal{L}_\tau, \tilde{\mathcal{L}}_\tau) \leq e^{- \rho^{*} \tau} \cW _{2, \alpha^*} (\mathcal{L}_0, \tilde{\mathcal{L}}_0)$. 
\end{proof}

Theorem~\ref{thm:joint-contraction} compares two solutions of \eqref{eq:Z} driven by the same Brownian motion, and the contraction between the two requires Lipschitz conditions to hold for any pair of solutions. The next corollary compares the iterate (with $\lambda>0$) to the optimal control itself, in the spirit of the nonasymptotic bound for stochastic gradient Langevin dynamics in \citet{raginsky2017non}, which only requires \eqref{eq:H1}-\eqref{eq:H4} to hold locally around the optimal solution.
\begin{corollary}\label{cor:bias}
Fix $x\in\R^d$, let $(u_\tau,\bW^\tau)$ solve \eqref{eq:Z} with $\lambda$ replaced by a measurable, locally bounded temperature $\lambda(\tau)\ge0$, and $(u^*,\bW^*)$, $\bW^*=(v^*,v^*_x,v^*_{xx},p^*,q^*)$ be its stationary state, satisfying
\begin{equation}\label{eq:equilibrium}
\nabla_u\bH(x,u^*,\bW^*)=0,\qquad \beta\bW^*=\cG(x,u^*,\bW^*),
\end{equation}
Assume \eqref{ass:bouned-all}, and that for every $\tau\ge0$ the inequalities \eqref{eq:H1}--\eqref{eq:H4} hold with
\begin{align*}
&\eqref{eq:H1}:\ (u,\bW,\tilde\bW)=(u_\tau,\bW^\tau,\bW^*),\qquad\quad
\eqref{eq:H2}:\ (u,\tilde u,\bW)=(u_\tau,u^*,\bW^*),\\
&\eqref{eq:H3}:\ (u,\tilde u,\bW)=(u_\tau,u^*,\bW^\tau),\text{ and }(u,u^*,\bW^*)\quad\;\;
\eqref{eq:H4}:\ (u,\bW,\tilde\bW)=(u^*,\bW^\tau,\bW^*),
\end{align*}
with constants independent of $\tau$ satisfying \eqref{eq:smallgain} and $L_{uV}L_{Vu}>0$. Let $\alpha^*,\rho^*$ be as in \eqref{eq:alpha} and \eqref{eq:contraction}. Then $u^*$ is the unique maximizer of $\bH(x,\cdot,v^*_x,v^*_{xx})$, $v^*=V(x)$. If $\hat u = \{u_t = u^*,t\geq 0\} \in \mathcal A(x)$, then $\hat u$ is the optimal control. Furthermore, with $\Phi^*_\tau:=\|u_\tau-u^*\|^2+\alpha^*\|\bW^\tau-\bW^*\|^2$,
\begin{equation}\label{eq:bias-L2}
\E\|u_\tau(x)-u^*(x)\|^2+\alpha^*\,\E|v^\tau(x)-v^*(x)|^2
\le \E\Phi^*_\tau\le e^{-2\rho^*\tau}\,\E\Phi^*_0+2n\int_0^\tau e^{-2\rho^*(\tau-s)}\lambda(s)\,ds .
\end{equation}
\end{corollary}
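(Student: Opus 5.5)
The plan has two parts. The first identifies $u^*$ and $v^*$ from the structural conditions. The second proves the $L^2$ bound \eqref{eq:bias-L2} by coupling the iterate with the constant process $(u^*,\bW^*)$, in the spirit of the proof of Theorem \ref{thm:joint-contraction}.

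\emph{Identification of $u^*$ and $v^*$.} Apply \eqref{eq:H3} with $(u,\tilde u,\bW)=(u,u^*,\bW^*)$ for deterministic $u$ and use $\nabla_u\bH(x,u^*,\bW^*)=0$. This gives $\langle\nabla_u\bH(x,u,\bW^*),u-u^*\rangle\le-\kappa\|u-u^*\|^2$ for all $u$. Integrating along the segment from $u^*$ to $u$ yields
\[
\bH(x,u,\bW^*)-\bH(x,u^*,\bW^*)\le-\tfrac{\kappa}{2}\|u-u^*\|^2 .
\]
So $u^*$ is the unique maximizer of $\bH(x,\cdot,v^*_x,v^*_{xx})$. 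The first component of $\beta\bW^*=\cG(x,u^*,\bW^*)$ reads $-\beta v^*+\bH(x,u^*,v^*_x,v^*_{xx})=0$, which combined with maximality is
\[
-\beta v^*+\sup_u\bH(x,u,v^*_x,v^*_{xx})=0 ,
\]
i.e.\ the HJB equation \eqref{eq:HJB} at $x$. The standard verification argument, invoked exactly as at the end of Proposition \ref{prop:limit-det} and using admissibility of $\hat u$, then gives $v^*=V(x)$ and optimality of $\hat u$. (Implicitly the stationarity relations are assumed at every $x$, so that HJB holds globally.)

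\emph{The $L^2$ bound.} Set $\delta u_\tau=u_\tau-u^*$ and $\delta\bW_\tau=\bW^\tau-\bW^*$. Subtracting the equilibrium relations \eqref{eq:equilibrium} gives
\[
d\,\delta u_\tau=\big(\nabla_u\bH(x,u_\tau,\bW^\tau)-\nabla_u\bH(x,u^*,\bW^*)\big)d\tau+\sqrt{2\lambda(\tau)}\,dB_\tau ,
\]
\[
d\,\delta\bW_\tau=\big(-\beta\,\delta\bW_\tau+\cG(x,u_\tau,\bW^\tau)-\cG(x,u^*,\bW^*)\big)d\tau .
\]
Unlike the synchronous coupling of Theorem \ref{thm:joint-contraction}, the noise does not cancel. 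Itô's formula on $\|\delta u_\tau\|^2$ therefore adds the term $2n\lambda(\tau)\,d\tau$ and a martingale. The martingale has zero mean by \eqref{ass:bouned-all}; I would localize if needed and pass to the limit by dominated convergence.

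\emph{Drift estimates.} The drift of $\|\delta u\|^2$ is split through the intermediate point $(u^*,\bW^\tau)$, so that only the listed pairs are used:
\begin{itemize}
\item $\nabla_u\bH(u_\tau,\bW^\tau)-\nabla_u\bH(u^*,\bW^\tau)$ is controlled by \eqref{eq:H3} with the pair $(u_\tau,u^*,\bW^\tau)$;
\item $\nabla_u\bH(u^*,\bW^\tau)-\nabla_u\bH(u^*,\bW^*)$ is controlled by Cauchy--Schwarz and \eqref{eq:H4} with the pair $(u^*,\bW^\tau,\bW^*)$.
\end{itemize}
For $\|\delta\bW\|^2$ the split is through $(u_\tau,\bW^*)$:
\begin{itemize}
\item $\cG(u_\tau,\bW^\tau)-\cG(u_\tau,\bW^*)$ is controlled by \eqref{eq:H1};
\item $\cG(u_\tau,\bW^*)-\cG(u^*,\bW^*)$ is controlled by \eqref{eq:H2}.
\end{itemize}
This reproduces the two differential inequalities from the proof of Theorem \ref{thm:joint-contraction}, now with $a_\tau=\E\|\delta u_\tau\|^2$ and $b_\tau=\E\|\delta\bW_\tau\|^2$ and the extra source $2n\lambda(\tau)$ in the first.

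\emph{Closing the argument.} Take $\alpha^*=L_{Vu}/L_{uV}$, well defined since $L_{uV}L_{Vu}>0$. The same quadratic-form argument, with $\rho^*$ the smaller root of $(m_1-\rho)(m_2-\rho)=g$, gives
\[
\frac{d}{d\tau}\E\Phi^*_\tau\le-2\rho^*\,\E\Phi^*_\tau+2n\lambda(\tau)
\]
in the integrated sense. Justifying absolute continuity of $\tau\mapsto\E\Phi^*_\tau$ uses \eqref{ass:bouned-all} as in the theorem, and local boundedness of $\lambda$ makes the source integrable. Gronwall (variation of constants) then yields the right inequality of \eqref{eq:bias-L2}. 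The left inequality is immediate because $|v^\tau-v^*|^2\le\|\bW^\tau-\bW^*\|^2$.

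The main obstacle is the Itô step, in particular the integrability needed to kill the martingale term and to differentiate $\E\Phi^*_\tau$ when the hypotheses hold only in expectation along the trajectory. The remaining steps are direct adaptations of the theorem's proof.
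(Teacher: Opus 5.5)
Your proposal is correct and follows the paper's proof essentially step for step. The shared steps are: \eqref{eq:H3} at $(u,u^*,\bW^*)$ integrated along the segment to get unique maximality, followed by the verification argument; It\^o's formula for $\Phi^*_\tau$ with the extra source $2n\lambda(\tau)$ and the martingale killed by \eqref{ass:bouned-all}; the same intermediate points $(u^*,\bW^\tau)$ and $(u_\tau,\bW^*)$, which are exactly the pairs listed for \eqref{eq:H1}--\eqref{eq:H4}; the quadratic-form choice $\alpha^*=L_{Vu}/L_{uV}$; and Gronwall. Your phrasing of the last step as $\frac{d}{d\tau}\E\Phi^*_\tau\le-2\rho^*\E\Phi^*_\tau+2n\lambda(\tau)$ is in fact the right form for variation of constants with a negative rate, and slightly tidier than the paper's integral inequality anchored only at $\tau=0$.
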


\begin{proof}{Proof.}
By \eqref{eq:H3} with $(u,\tilde u,\bW)=(u,u^*,\bW^*)$ and \eqref{eq:equilibrium}, $\langle\nabla_u\bH(x,u,\bW^*),u-u^*\rangle\le-\kappa\|u-u^*\|^2$ for all $u$. Integrating along $u_t:=u^*+t(u-u^*)$,
\[
\bH(x,u,\bW^*)-\bH(x,u^*,\bW^*)=\int_0^1\frac1t\big\langle\nabla_u\bH(x,u_t,\bW^*),\,u_t-u^*\big\rangle dt\le-\frac{\kappa}{2}\|u-u^*\|^2.
\]
Thus $u^*$ is the unique maximiser, and Proposition~\ref{prop:limit-det} applies. 

The first inequality in \eqref{eq:bias-L2} holds by the definition of $\|\bW\|$. For the second, It\^o's formula and \eqref{eq:equilibrium} imply
\begin{align}
\Phi^*_\tau=\Phi^*_0&+2\int_0^\tau\big\langle\nabla_u\bH(x,u_s,\bW^s)-\nabla_u\bH(x,u^*,\bW^*),\,u_s-u^*\big\rangle ds\nonumber\\
&+2\alpha^*\int_0^\tau\big\langle-\beta(\bW^s-\bW^*)+\cG(x,u_s,\bW^s)-\cG(x,u^*,\bW^*),\,\bW^s-\bW^*\big\rangle ds\nonumber\\
&+2n\int_0^\tau\lambda(s)\,ds+2\int_0^\tau\sqrt{2\lambda(s)}\,\langle u_s-u^*,dB_s\rangle.\label{eq:Phi-ito}
\end{align}
By \eqref{ass:bouned-all} the stochastic integral is a true martingale. Then follow similar argument as for Theorem~\ref{thm:joint-contraction}, $\E\Phi^*_\tau\le\E\Phi^*_0-2\rho^*\int_0^\tau\E\Phi^*_s\,ds+2n\int_0^\tau\lambda(s)\,ds$, and Gronwall's lemma yields \eqref{eq:bias-L2}.
\end{proof}

\begin{remark}
Corollary \ref{cor:bias} implies that under the Lipschitz conditions in expectation, we can design annealed temperature to retrieve the optimal solution to the stochastic control problem. For a constant temperature $\lambda$ the last term in \eqref{eq:bias-L2} equals $\frac{n\lambda}{\rho^*}\big(1-e^{-2\rho^*\tau}\big)$. While for $\lambda(\tau)=\lambda_0e^{-c\tau}$ with $c\neq2\rho^*$ it equals $\frac{2n\lambda_0}{2\rho^*-c}\big(e^{-c\tau}-e^{-2\rho^*\tau}\big)$, so that $u_\tau(x)\to u^*(x)$ and $v^\tau(x)\to v^*(x)$ in $L^2$ at the rate $e^{-(c\wedge2\rho^*)\tau/2}$.

Since $v^\tau(x)=J(x,\mathbf u^\tau)\le V(x)$ by admissibility, Jensen's inequality and \eqref{eq:bias-L2} bound the expected excess value, $0\le V(x)-\E v^\tau(x)\le(\E\Phi^*_\tau/\alpha^*)^{1/2}$, the analogue of the expected excess risk in \citet{raginsky2017non}: at a constant temperature the price of exploration is of order $\sqrt{n\lambda/\rho^*}$, and it disappears under annealing. With $\lambda\equiv0$ and deterministic initial data, \eqref{eq:bias-L2} is the convergence $\|z_\tau-z^*\|_{\alpha^*}\le e^{-\rho^*\tau}\|z_0-z^*\|_{\alpha^*}$ of the deterministic iteration of Section~\ref{sec: classic} to the optimal pair, under \eqref{eq:H1}--\eqref{eq:H4}.
\end{remark}
\section{Numerical Examples}\label{sec: numerics}
In this section we implement the iteration developed in this paper to two example. The first one is the linear-quadratic model, which is the benchmark model most of reinforcement learning literature focus on. We use annealed temperature coefficient so that the iteration converges to the optimal solution of the stochastic control problem. 
\subsection{Linear-Quadratic Model}
In this section we verify the Lipschitz condition in expectation in \eqref{eq:H1}--\eqref{eq:H4} in a general linear-quadratic model for control iteration that starts from Gaussian initial values. Suppose the state \(X\) follows 
\begin{align*}
    dX_t = (AX_t + Bu_t) dt + \sigma dW_t,
\end{align*}
where \(X_t \in \mathbb{R}^d, u_t \in \mathbb{R}^n, A \in \mathbb{R}^{d \times d}, B \in \mathbb{R}^{d\times n}\), with quadratic running reward $f(x,u) = - x^\top Qx - u^\top R u$, nd \(Q \succeq 0, R \succeq 0.\)  The corresponding Hamiltonian 
\begin{align*}
    &\bH(x, u, v_x, v_{xx}) = -x^\top Q x - u^\top R u + (Ax + Bu)^T v_x + \frac{1}{2} Tr(\sigma^\top \sigma v_{xx}),\\
    &\nabla_u \bH(x,u ,W) = -2 Ru + B^\top v_x,
\end{align*}
and the control iteration is
\begin{align*}
    du_\tau = (-2R u_\tau(x) + B^\top v^\tau_x (x)) d\tau +\sqrt{2\lambda} dB_\tau.
\end{align*}
We choose an affine $u_0(x) = K_0 x + \xi_0$, where \(K_0 \in \mathbb{R}^{n \times d}\) is deterministic and \(\xi_0\) is Gaussian independent of Brownian motion \(B.\) The initial value \(v^0 = J(x,u^0)\), where $u^0_t = u_0$ for every $t\geq 0$ can be calculated explicitly,
\begin{align}
    v^0 &:= x^\top P_0 x + m_0^\top x + c_0,\\
    P_0 &= -\int_0^\infty e^{-\beta t}\, e^{(A+BK_0)^\top t}\,\big(Q + K_0^\top R K_0\big)\, e^{(A+BK_0)t}\,dt, \label{eq:P0}\\
m_0 &= 2\big[\beta I_d - (A+BK_0)^\top\big]^{-1}\big(P_0 B - K_0^\top R\big)\,\xi_0, \label{eq:m0}\\
c_0 &= \frac{-\xi_0^\top R\,\xi_0 + \xi_0^\top B^\top m_0 + \operatorname{Tr}\!\big(\sigma\sigma^\top P_0\big)}{\beta}. \label{eq:c0}
\end{align}
The dynamics \(\frac{d}{d\tau}v^\tau = -\beta v^\tau(x) +  \bH(x,u_\tau , v^\tau_x (x), v^\tau_{xx}(x)),\) implies that \(v^\tau\) is quadratic for every $\tau >0$, and can be written as
\begin{align*}
    v^\tau = x^\top P_\tau x + m^\top_\tau x + c_\tau,\quad v^\tau_x = 2 P_\tau x + m_\tau, \quad
    v^\tau_{xx} = 2P_\tau.
\end{align*}
Therefore, the control iterates remain linear in $x$, and can be written as $u_\tau(x) =  K_\tau x + \xi_\tau$, where
\begin{align}
    &du _\tau= (-2R K_\tau + 2 B^\top P_\tau)x d_\tau + (-2 R \xi_\tau + B^\top m_\tau) d\tau + \sqrt{2\lambda} dB_\tau;\\
    &dK_\tau = (-2 RK_\tau + 2B^\top P_\tau) d\tau;\label{eq:K_iteration}\\
    & d \xi_\tau = (-2 R \xi_\tau + B^\top m_\tau) d \tau + \sqrt{2\lambda} dB_\tau.\label{eq:xi_iteration}
\end{align} 
The value iteration implies the following dynamics in the parameters of $v^\tau$
\begin{align}
    &\frac{dP_\tau}{d \tau } =  - \beta P_\tau - Q - K_\tau^\top R K_\tau + (A+ BK_\tau)^\top P_\tau + P_\tau(A+BK_\tau);\label{eq:P_iteration}\\
    &\frac{dm_\tau}{d \tau} = \left[ -\beta I + (A + BK _\tau)^\top\right] m _\tau + (-2RK_\tau +2B^\top P_\tau)^\top \xi_\tau;\label{eq:m_iteration}\\
    &\frac{dc_\tau}{d\tau} = -\beta c_\tau - \xi_\tau^\top R \xi_\tau + \xi_\tau^\top B^\top m_\tau + Tr(\sigma \sigma^\top P_\tau).\label{eq:c_iteration}
\end{align}
where \(\xi_\tau^\top R\xi_\tau\) is quadratic Gaussian and \(c_\tau\) generally is not. In this case,
\begin{align*}
    \bW^\tau :=(v^\tau(x), v^\tau_x(x), v^\tau_xx(x) , p_\tau, q_\tau) =(x^\tau P_\tau x+ m_\tau^\top x+ c_\tau, 2 P_\tau x+ m_\tau, 2 P_\tau , K_\tau, 0).
\end{align*}
Therefore, the policy-value iteration is a closed system inside a finite-dimensional family $(K_\tau, \xi_\tau, P_\tau, m_\tau, c_\tau)$ governed by \eqref{eq:K_iteration} - \eqref{eq:c_iteration}.

\subsubsection{Convergence Conditions}

Next we find the Lipschitz constants $k(x)$, $L_{VV}(x)$, $L_{Vu}(x)$, and $L_{uV}(x)$ in Assumption \ref{assum:coupling-bounds} and verify the boundedness assumption \eqref{ass:bouned-all}, and the inequality \eqref{eq:smallgain}.

For fixed \(T,\) and \(0\le \tau \le T, K_\tau\) and \(P_\tau\) are deterministic. Since they are continuous function of \(\tau\),
\begin{align*}
    \sup_{0 \leq \tau \leq T} \left\lVert K_\tau \right\rVert < \infty, \quad \sup _{0\leq \tau \leq T} \left\lVert P_\tau \right\rVert < \infty.
\end{align*}
The iteration of \(\xi_\tau, m_\tau\) is a linear Gaussian system, with finite fourth moment, and 
\begin{align}\label{eq:bounded_fourth}
    \sup _{0\leq \tau\leq T} \mathbb{E} \left[\left\lVert \xi_\tau \right\rVert^4 + \left\lVert m_\tau\right\rVert ^4 \right] < \infty. 
\end{align}
\(\xi_\tau, m_\tau \in L^4\) implies that \(\xi_\tau^\top R \xi_\tau , \xi_\tau^\top B^\top m_\tau \in L^2\), and thus \(\sup_{t \leq T} \mathbb{E}\left\vert c_\tau \right\vert ^2 < \infty.\) Therefore
\begin{align*}
    \sup_{0\leq \tau \leq T}\mathbb{E}\left\lVert u_\tau\right\rVert ^2 = \sup_{0\leq \tau \leq T}\mathbb{E}\left\lVert K_\tau x+ \xi_\tau\right\rVert ^2 \leq 2\sup_{0\leq \tau \leq T}\left\lVert K_\tau\right\rVert ^2 \left\lVert x\right\rVert ^2 + 2 \sup_{0\leq \tau \leq T}\mathbb{E}\left\lVert \xi_\tau\right\rVert ^2 < \infty.
\end{align*}
Furthermore, with \(\left\lVert \bW^\tau\right\rVert^2 = \left\vert v \right\vert^2 + \left\lVert v_x\right\rVert ^2 + \left\lVert v_{xx} \right\rVert^2_F + \left\lVert p\right\rVert_F^2 + \left\lVert q\right\rVert _F^2\), $\sup_{0 \leq \tau \leq T } \mathbb{E} \left\lVert \bW^\tau\right\rVert ^2 < \infty$, and
\begin{align*}
    \sup_{0\leq \tau \leq T} \mathbb{E}\left\lVert \nabla_u \bH(x, u_\tau, \bW^\tau)\right\rVert^2 \leq& \sup _{0\leq \tau \leq T} \mathbb{E} \left\lVert -2Ru_\tau + B^\top v^\tau_x\right\rVert^2\\
    \leq& \sup _{0\leq \tau \leq T} 8 \left\lVert R\right\rVert ^2 \mathbb{E}\left\lVert u_\tau\right\rVert ^2 + 2 \left\lVert B\right\rVert ^2 \mathbb{E}\left\lVert v^\tau_x\right\rVert ^2 < \infty,
\end{align*}
Notice that
\begin{align}
    -\beta \bW^\tau + \mathcal{G}(x ,u_\tau, \bW^\tau) = \frac{d}{d\tau} \bW^\tau = (\dot{v}^\tau, \dot{v}^\tau_x, \dot{v}^\tau_{xx}, \dot{K}_\tau, 0),
\end{align}
with $\lVert \dot{K}_\tau \rVert \leq 2 \left\lVert R\right\rVert \left\lVert K_\tau\right\rVert + 2 \left\lVert B\right\rVert \left\lVert P_\tau \right\rVert$. Since \(K_\tau\) and \(P_\tau\) are bounded on \([0,T],\) \(\sup_{0\le \tau\le T} \mathbb{E} \lVert \dot{K}_\tau \rVert < \infty,\) and \(\sup_{0\le \tau \le T} \mathbb{E} \lVert \dot{K}_\tau \rVert ^2 < \infty.\) Furthermore, 
\begin{align*}
    \lVert \dot{P}_\tau \rVert  \le \beta \left\lVert P_\tau \right\rVert + \left\lVert Q\right\rVert  + \left\lVert R\right\rVert \left\lVert K_\tau \right\rVert ^2 + 2(\left\lVert A \right\rVert + \left\lVert B\right\rVert \left\lVert K_\tau \right\rVert) \left\lVert P_\tau \right\rVert,
\end{align*}
where every term on the right is bounded on \([0,T]\), and thus \(\sup_{0 \le \tau \le T}  \left\lVert \dot{P}_\tau\right\rVert  < \infty.\) Similarly,
\begin{align}
    \lVert \dot{m}_\tau\rVert \le 2 \left\lVert -\beta I + (A+ BK_\tau)^\top \right\rVert^2 \left\lVert m_\tau\right\rVert^2 + 2 \left\lVert -2RK_\tau+ 2B^\top P_\tau\right\rVert  ^2 \left\lVert \xi_\tau\right\rVert^2,
\end{align}
and \eqref{eq:bounded_fourth} implies that \(\sup_{0 \le \tau \le T} \mathbb{E}  \left\lVert \dot{m}_\tau\right\rVert^2 < \infty.\) Finally
\begin{align}
    \left\vert  \dot{c}_\tau \right\vert^2 \le 4 \left[ \beta^2 \left\vert   c_\tau \right\vert^2  + \left\vert \xi_\tau^\top R \xi_\tau \right\vert^2 + \left\vert  \xi_\tau^\top B^\top m_\tau \right\vert^2 + \left\vert  Tr(\sigma \sigma^\top P_\tau) \right\vert^2\right].
\end{align}
Since \(\xi_\tau, m_\tau \in L^4\), Cauchy-Schwarz inequality implies that \(\sup_{0 \le \tau \le T} \mathbb{E} \left\vert  \dot{c}_\tau \right\vert^2  < \infty.\)
With the boundedness from above we conclude that $\sup_{0 \le \tau \le T } \mathbb{E} \left\lVert -\beta \bW^\tau + \mathcal{G}(x,u_\tau, \bW^\tau) \right\rVert^2 < \infty$.

For the Lipschitz condition in expectation, 
\begin{align*}
    &\mathbb{E}\left\langle\nabla_u\bH(x,u,\bW)-\nabla_u\bH(x,\tilde u,\bW),\,u-\tilde u\right\rangle =  -2 (u - \tilde{u})^\top R( u - \tilde{u}) \le -2 \lambda_{min}(R) \left\lVert u -\tilde{u} \right\rVert^2;\\
    &\mathbb{E}\big\|\nabla_u\bH(x,u,\bW)-\nabla_u\bH(x,u,\tilde \bW)\big\|^2 =\mathbb{E} \lVert B^\top(v_x - \tilde{v}_x)\rVert^2 \le \left\lVert B\right\rVert^2 \mathbb{E}\left\lVert \bW - \tilde{\bW} \right\rVert^2.
\end{align*}
Therefore, \(k(x) = 2\lambda_{\min}(R), L_{Vu}(x) = \left\lVert B\right\rVert.\) For \eqref{eq:H1} and \eqref{eq:H2}, 
\begin{align*}
    \mathcal G(x,u,\bW)=\begin{pmatrix}
-x^\top Q x - u^\top R u + (Ax + Bu)^T v_x + \frac{1}{2} Tr(\sigma^\top \sigma v_{xx})
\\[2mm]
-2Qx + A^\top v_x + v_{xx}(Ax+Bu)
+ p^\top(-2Ru + B^\top v_x)
\\[2mm]
-2Q + A^\top v_{xx} + v_{xx}A
+ p^\top B^\top v_{xx}
+ v_{xx}Bp
- 2p^\top Rp
\\[2mm]
(\beta I - 2R)p + B^\top v_{xx}
\\[2mm]
0
\end{pmatrix}.
\end{align*}
and 
\begin{align*}
G(x,u, \bW)-G(x,\widetilde u, \bW)
=
\begin{pmatrix}
(u-\widetilde u)^\top
\left[
B^\top  v_x
-
R(u+\widetilde u)
\right]
\\[2mm]
\left[
 v_{xx}B
-
2 K^\top R
\right]
(u-\widetilde u)
\\[2mm]
0
\\
0
\\
0
\end{pmatrix}.
\end{align*}
Define \(\bar{P} := \max \{\sup_{0 \le \tau \le T} \left\lVert P_\tau\right\rVert, \sup_{0 \le \tau \le T} \lVert \tilde{P}_\tau\rVert \}, \bar{K} := \max \{\sup_{0 \le \tau \le T} \left\lVert K_\tau\right\rVert, \sup_{0 \le \tau \le T} \lVert \tilde{K}_\tau\rVert \},\) then with H\"older's inequality and treating \(u - \tilde{u}\) as gussian difference,
\begin{align*}
    \left\lVert B^\top v_x - R(u + \tilde{u})^\top (u -\tilde{u})\right\rVert_{L^2} &\le \left\lVert B^\top v_x - R(u + \tilde{u})\right\rVert_{L^4} \left\lVert u - \tilde{u} \right\rVert_{L^4} 
 \le  \left\lVert B^\top v_x - R(u + \tilde{u})\right\rVert_{L^4} 3^{1/4} \left\lVert u - \tilde{u} \right\rVert_{L^2}.
\end{align*}
Since \(v_{xx} = 2P \) and \(p = K\) are deterministic 
\begin{align*}
    \left\lVert v_{xx}B - 2 p^\top R\right\rVert \le 2 \left\lVert B \right\rVert \bar{P} + 2 \left\lVert R \right\rVert \bar{K}.
\end{align*}
Combining them together implies 
\begin{align}
    L_{uV} = \left[ (2 \left\lVert B\right\rVert) \bar{P} + 2 \left\lVert R\right\rVert \bar{K})^2 + \sqrt{3} \sup_{0 \le \tau \le T} \left\lVert B^\top v^\tau_x - R (u_\tau + \tilde{u}_\tau)\right\rVert^2_{L^4}\right]^{1/2}.\label{eq:LQ_LuV}
\end{align}
On the other hand,
\begin{align*}
G(x,u,\bW)-G(x,u,\widetilde \bW)= 
&\begin{pmatrix}
(Ax+Bu)^\top(v_x-\widetilde v_x)
+\frac12\operatorname{Tr}\!\left[\sigma\sigma^\top(v_{xx}-\widetilde v_{xx})\right]
\\[3mm]
\begin{aligned}[t]
&A^\top(v_x-\widetilde v_x)
+(v_{xx}-\widetilde v_{xx})(Ax+Bu)\\
&\quad+p^\top(-2Ru+B^\top v_x)
-\widetilde p^\top(-2Ru+B^\top\widetilde v_x)
\end{aligned}
\\[3mm]
\begin{aligned}[t]
&A^\top(v_{xx}-\widetilde v_{xx})
+(v_{xx}-\widetilde v_{xx})A\\
&\quad+p^\top B^\top v_{xx}+v_{xx}Bp
-\widetilde p^\top B^\top\widetilde v_{xx}
-\widetilde v_{xx}B\widetilde p\\
&\quad-2\bigl(p^\top Rp-\widetilde p^\top R\widetilde p\bigr)
\end{aligned}
\\[3mm]
(\beta I-2R)(p-\widetilde p)
+B^\top(v_{xx}-\widetilde v_{xx})
\\[3mm]
0
\end{pmatrix}.
\end{align*}
For inner product with respect to norm of \(\bW,\) using Cauchy-Schwarz inequality and the Gaussian fourth-moment bound,
\begin{align}
L_{VV}=&
\max\left\{
0,
2\lambda_{\max}\!\left(\frac{A+A^\top}{2}\right)
+2\|B\|\bar{K},
\beta-2\lambda_{\min}(R)
\right\} \nonumber \\
&+\frac14\|\sigma\sigma^\top\|_F
+\frac{1}{2}\|B\|
+2
\left(\|B\|\bar{P}+\|R\|\bar{K}\right) \nonumber\\
&+\sup_{0 \le \tau \le T}\left[
\frac{\sqrt{3}+1}{2}\|Ax+Bu_\tau\|_{L^2}
+\frac{\|-2Ru_\tau+B^\top v_x^\tau\|_{L^2}}
{2}
\right].\label{eq:LQ_LVV}
\end{align}
Notice that these Lipschitz constants depend on $\bar P$ and $\bar K$, and thus on $T$. In numerical implementations, the rounds of iteration are always finite. Thus \eqref{eq:smallgain} is satisfied for sufficiently large $\beta$ and leads to exponential convergence to optimal policy and value iteration.
\subsubsection{Numerical Implementation}
we use two sets of parameters for numerical experiments. The first set is 
\begin{align}\label{model:LQ_scalar}
    A = -1, \quad B = 1, \quad Q = 1, \quad R =1, \quad \beta =0.05, \quad \sigma =0.2.
\end{align}
The scalar model is a special case of classical discounted LQ problem studied in \citet{wang2020reinforcement}. With \(\beta > 2A\) and $k_2 =  \frac{R(\beta-2A)-\sqrt{R^2(\beta-2A)^2+4RB^2Q}}{B^2}$, the optimal value and control are
\begin{align*}
    u^*(x) = K^*x, \quad v^*(x) = P^* x^2 + c^*, \quad P^* = \frac{k_2}{2}, \quad K^* = \frac{BP^*}{R}, \quad c^* = \frac{\sigma^2 P^*}{\beta}.
\end{align*}
We initialize the iteration with $u_0 = K_0 x+ \xi_0$, where \(K_0  = -10, \xi_0 \sim N(0,0.5^2)\). The coefficients of the initial value
function \(v^0 = x^\top P_0 x + m_0^\top x + c_0\) are (notice that $K_0 <0$ and $\beta > 2(A+BK_0)$)
\begin{align*}
    P_0 = -\frac{Q + RK_0^2}{\beta - 2(A + BK_0)}, \quad
    m_0 = \frac{2\,(BP_0 - RK_0)}{\beta - (A + BK_0)}\,\xi_0, \quad 
c_0 = \frac{-R\xi_0^2 + B\xi_0 m_0 + \sigma^2 P_0}{\beta}. 
\end{align*} 

We iterate over \(\tau \in [0,300]\) with step size \(\Delta \tau = 0.002\). Figure \ref{fig:LQ-profiles} shows the convergence of our iterates to the optimal control and value function. Notice that after $\tau = 2$, the value function moves in a parallel fashion. Thus its space derivatives $v_x$ and $v_{xx}$ converge fast, and so is the control variable $u_\tau$, whose dynamics only rely on the derivatives.

\begin{figure}[H]
\FIGURE
{\subfloat[Policy iteration ...\label{fig:LQ-policy}]{%
    \includegraphics[width=0.49\textwidth]{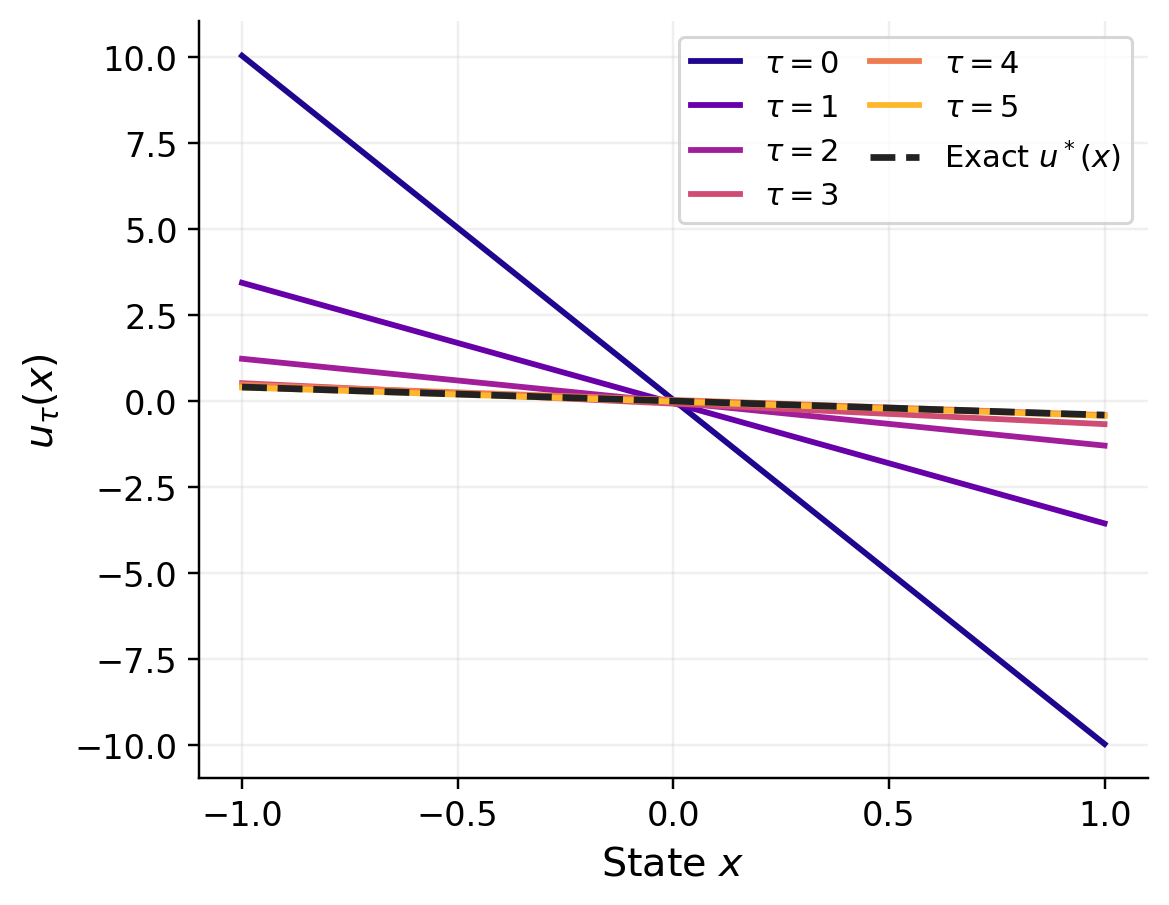}}\hfill
  \subfloat[Value iteration ...\label{fig:LQ-value}]{%
    \includegraphics[width=0.49\textwidth]{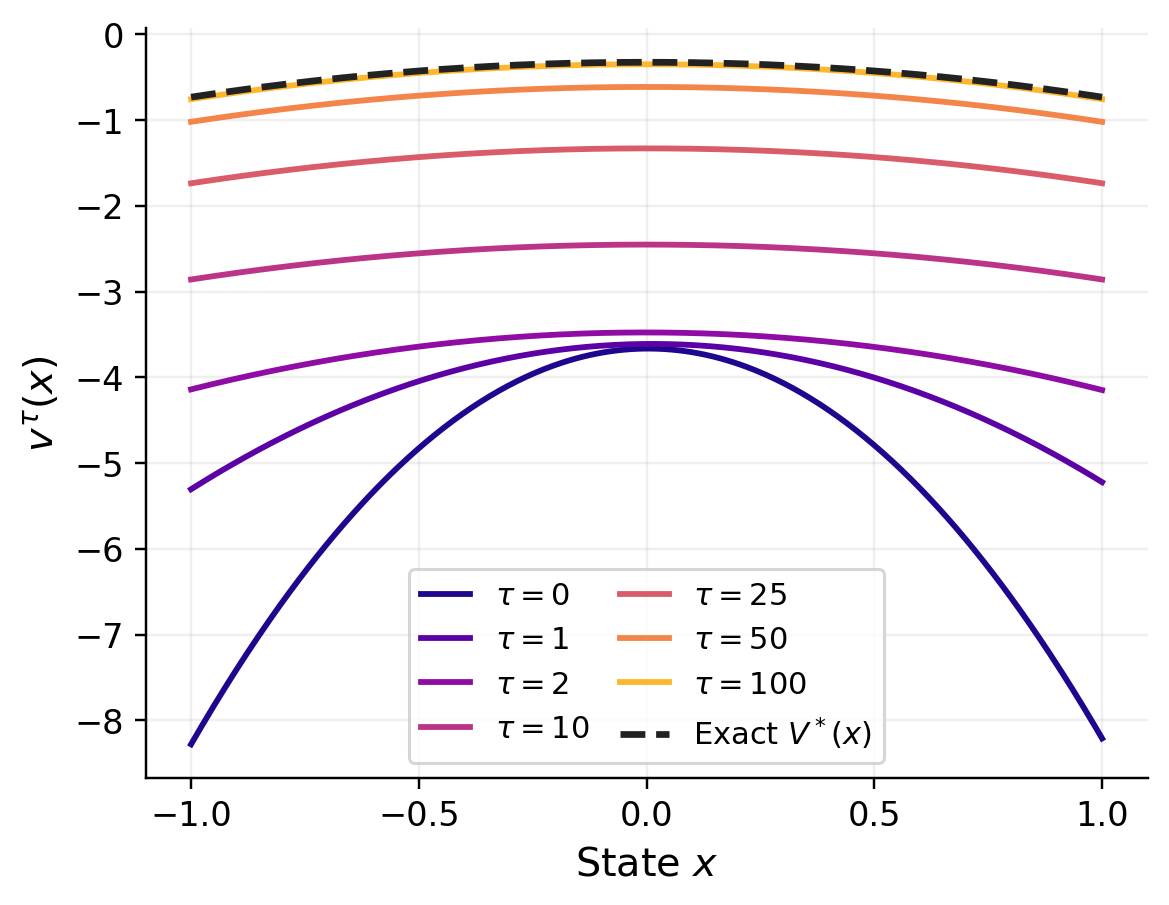}}}
{One sample path of policy and value iteration ... \label{fig:LQ-profiles}}
{}
\end{figure}
Figure \ref{fig:exponential-convergence} compares the iterated policy and value with optimal policy and value. It plots the mean weighted squared distance between them,
\begin{align*}
    \Phi_\tau^* = (u_\tau- u^*)^2 + (v^\tau -v^*)^2 +(\partial_x v^\tau - \partial_x v^*)^2 + 4 (P_\tau - P^*)^2 +(K_\tau -K ^*)^2,
\end{align*}
estimated from \(2,000\) sample paths, on a logarithmic scale. The distance decays exponentially. 
\begin{figure}[H]
\FIGURE
{\includegraphics[width=0.5\textwidth]{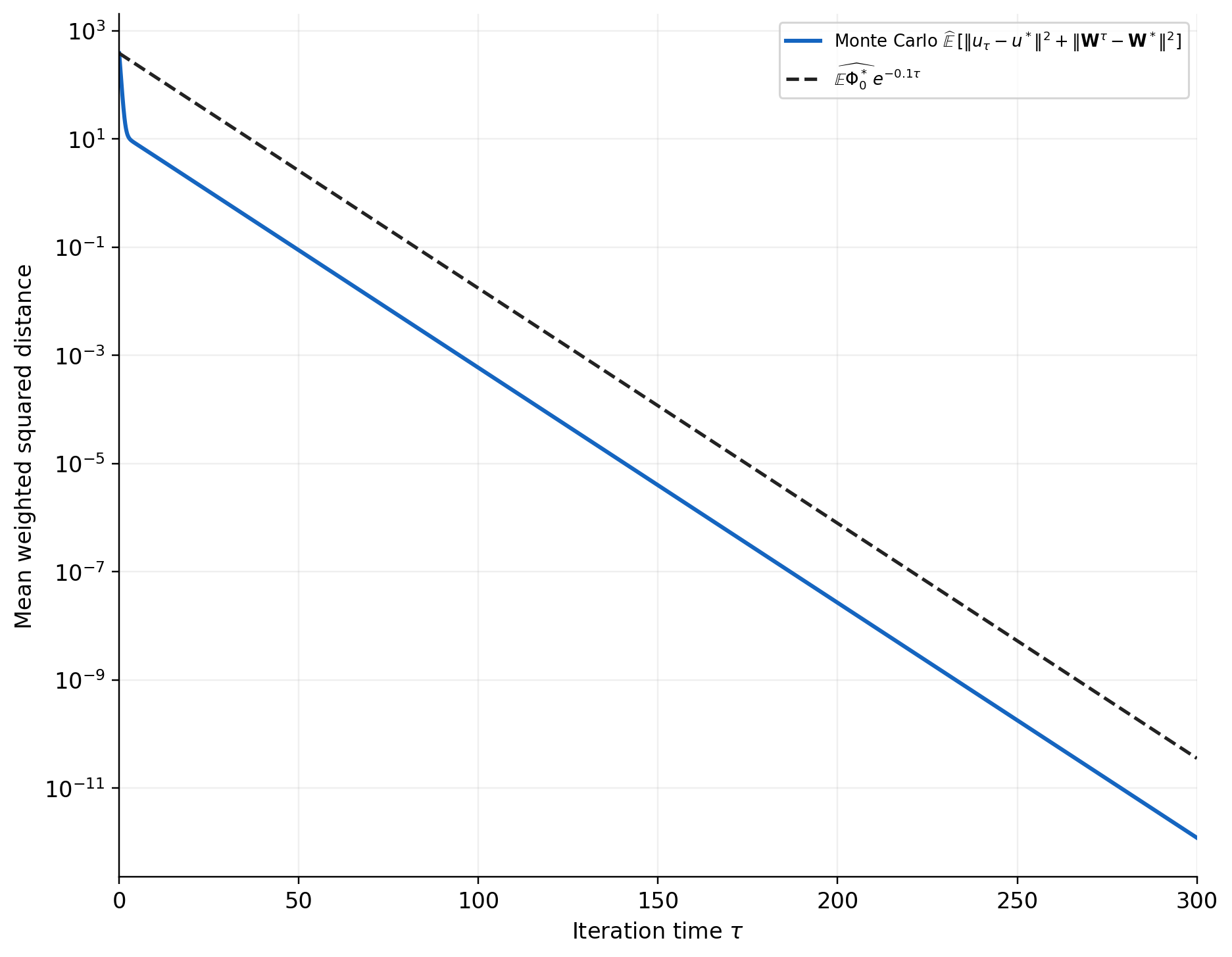}}
{Empirical convergence to the optimum. The blue curve shows the mean weighted squared error estimated using 2,000 Monte Carlo sample paths. The dashed line represents the reference decay \(\widehat{\mathbb E\Phi_{0}^*}e^{-0.1\tau}\). \label{fig:exponential-convergence}}
{}
\end{figure}
The second experiment demonstrates the case where the inequality \eqref{eq:smallgain} holds: 
\begin{align*}
A = -0.01, \quad B = 0.01, \quad Q = 0.01, \quad R =1, \quad \beta =0.05, \quad \sigma =0.1.
\end{align*}
From Riccati equation, \(K^*=-0.00143, P^*= -0.1428, c^*=-0.0286.\) In the following table we calculate the constants in \eqref{eq:H1} - \eqref{eq:H4} from Assumption \ref{assum:coupling-bounds} in each \(\tau \) and checks the inequality condition on the interval \([\tau_0, T]\) where $T = 300$ is the terminal round of iteration. Note that once the inequality is satisfied, it holds for all latter \(\tau\), which guarantees the exponential convergence. 

\begin{table}[H]
{Numerical check of inequality \eqref{eq:smallgain} at $x=1$, $T=300$, $k=2$, $L_{Vu}=0.01$, $\beta=0.05$ .\label{tab:LQ-contraction-example_2}}
{\begin{tabular}{@{}rrrrrrc@{}}
\toprule
$\tau_0$ & $L_{VV}$ & $L_{uV}$ & $k(\beta-L_{VV})$ & $L_{uV}L_{Vu}$ & $\rho_{\mathrm{calc}}$ & $k(\beta-L_{VV})>L_{uV}L_{Vu}$\\
\midrule
 90 & 0.0565 & 0.0352 & $-0.0131$ & 0.0004 & $-0.0067$ & FAIL\\
 92 & 0.0527 & 0.0314 & $-0.0053$ & 0.0003 & $-0.0028$ & FAIL\\
 94 & 0.0493 & 0.0280 & 0.0014 & 0.0003 & 0.0006 & PASS\\
 96 & 0.0464 & 0.0251 & 0.0073 & 0.0003 & 0.0035 & PASS\\
 98 & 0.0438 & 0.0226 & 0.0124 & 0.0002 & 0.0061 & PASS\\
100 & 0.0416 & 0.0203 & 0.0168 & 0.0002 & 0.0083 & PASS\\
125 & 0.0294 & 0.0082 & 0.0411 & 0.0001 & 0.0205 & PASS\\
150 & 0.0273 & 0.0062 & 0.0453 & 0.0001 & 0.0226 & PASS\\
200 & 0.0269 & 0.0057 & 0.0462 & 0.0001 & 0.0231 & PASS\\
250 & 0.0269 & 0.0057 & 0.0462 & 0.0001 & 0.0231 & PASS\\
\bottomrule
\end{tabular}}
{}
\end{table}

It also illustrates the fact that \eqref{eq:H1} - \eqref{eq:H4} and \eqref{eq:smallgain} are only sufficient conditions on the exponential convergence - in the early stage of the iteration (and also whole iteration in the experiment with the first set of parameters), \eqref{eq:smallgain} are violated, yet Figure \ref{fig:LQ}(a) shows the exponential convergence in joint contraction by defining (c.f. \eqref{eq:alpha} with $\alpha^* = 1$)
\begin{align*}
    \Phi_\tau = (u_\tau- \tilde{u}_\tau )^2 + (v_\tau -\tilde{v}^\tau )^2 +(\partial_x v^\tau - \partial_x \tilde{v}^\tau)^2 + 4 (P_\tau - \tilde{P}_\tau)^2 +(K_\tau -\tilde{K}_\tau )^2,
\end{align*}
and Figure \ref{fig:LQ}(b) towards the optimal solution on logarithmic scale. 
\begin{figure}[H]
\FIGURE
{\subfloat[Joint contraction distance.\label{fig:LQ-joint-contraction-2}]{%
    \includegraphics[width=0.49\textwidth]{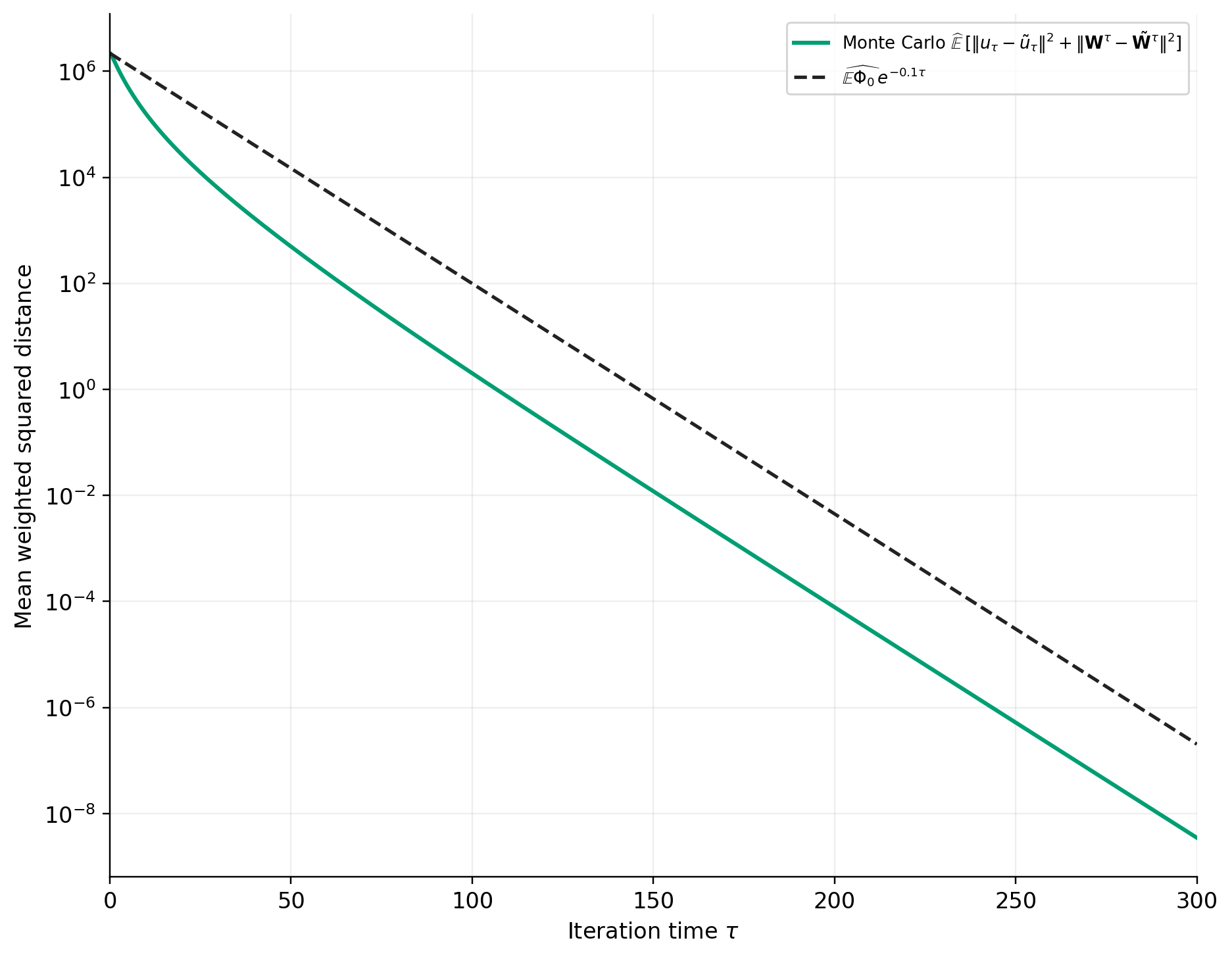}}\hfill
  \subfloat[Distance to the optimum.\label{fig:LQ-exponential-convergence-2}]{%
    \includegraphics[width=0.49\textwidth]{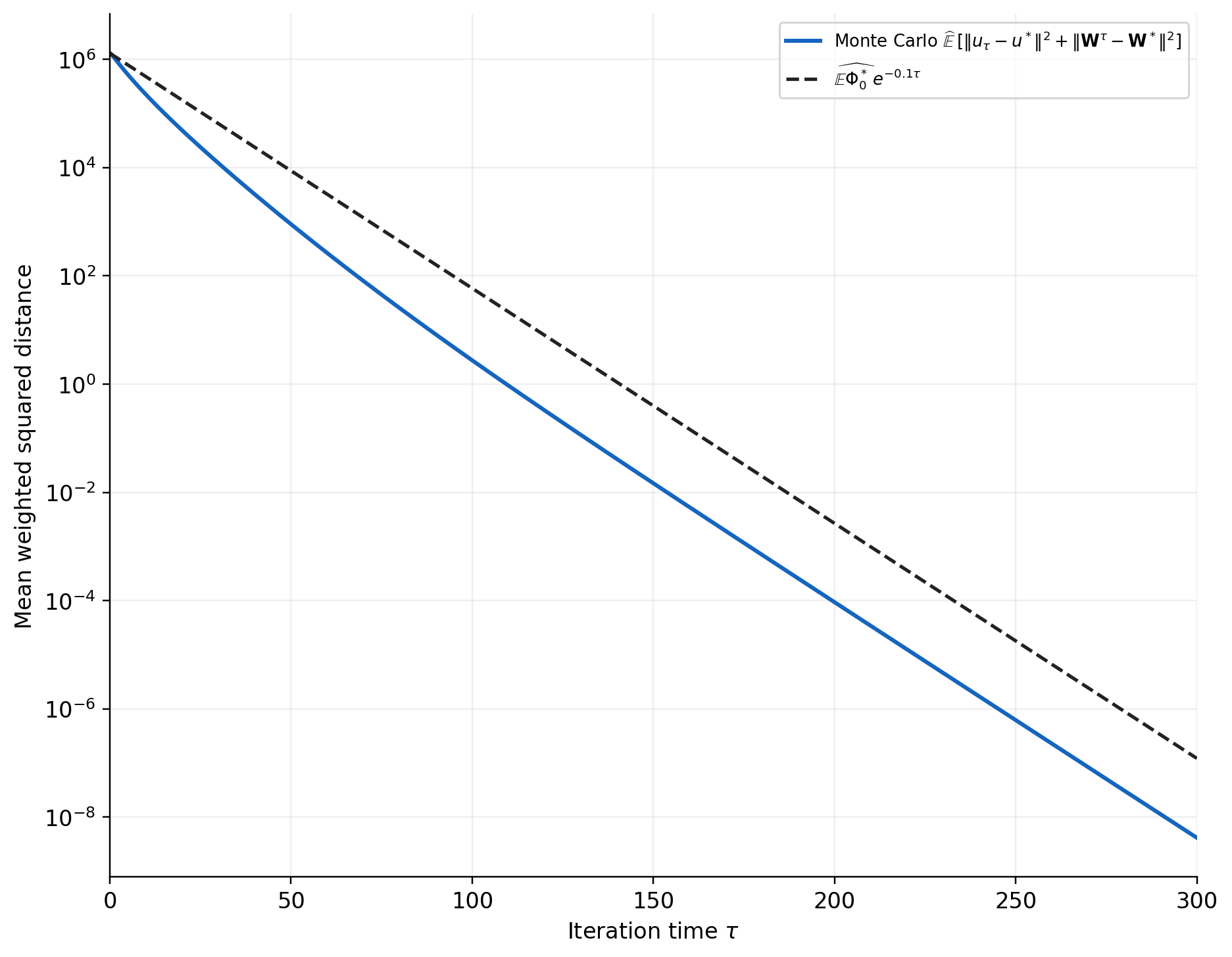}}}
{Empirical joint contraction and convergence to the optimum. Two iterations with initial policies $K_0=-10$ and $\tilde K_0=-20$ share the same initial $\xi_0\sim\mathcal N(0,0.5^2)$ and Brownian motion, with $\lambda(\tau)=0.04e^{-0.5\tau}$. (a) The green curve show the joint contraction distance. (b) The blue curve shows distance to optimal. Both curves are estimated using 2,000 Monte Carlo sample paths. The dashed line represents the reference decay \(\widehat{\mathbb E\Phi_{0}}e^{-0.1\tau}\)  and \(\widehat{\mathbb E\Phi_{0}^*}e^{-0.1\tau}\).\label{fig:LQ}} 
{}
\end{figure}
\subsection{Optimal Consumption and Investment in Incomplete Markets}\label{sec:power-utility-eg}
The second example is a classical yet notoriously open problem in mathematical finance -- optimal investment and consumption in incomplete market factor model. The associated HJB equation does not admit closed-form solutions in the power utility setting, and the extant theoretical results in approximate solutions all rely on delicate mathematical treatment, e.g. the closed-form solution in the completed market in \citet{karatzas1991brownian,he1991consumption} and market contraction in \citet{GW2020}, or convergent approximations by sub- and super-solution argument  \citet{gutekunst2026optimalinvestmentconsumptionstochastic}, which works only in one-dimensional factor model. \citet{bick2013solving} works in general multi-dimensional model where the dual bound results from the deterministic market price of risk, and thus does not provide theoretical guarantee of the approximation accuracy. Our method provides easily implementable alternative which converge to the optimal solution -- starting from any initial point, the approximation is generated by following the coupled iteration dynamics with elementary calculation each step. 

The model setup follows \citet{GW2020}, where the market consists of $n+1$ assets \((S_t^0, S_t^1, \dots, S_t^n)_{t\geq 0}\) with \(S^0\) being the risk-free asset. The risk free rate and the coefficients in the dynamics of $n$ risky assets are functions of \(k\) states variable \(Y = (Y_t^ i) ^{1\leq i\leq k}_{t\geq 0}\) such that \(Y_t \in E\) a.s. for all \(t\geq 0,\) where \(E \subset \mathbb{R}^k\) is open connected set. They follow the joint dynamics
\begin{align*}
    &\frac{dS_t^0}{S_t^0} = r(Y_t)dt;\\
    &\frac{dS_t^i}{S_t^i} = r(Y_t)dt + dR_t^i, \quad 1\leq i\leq n; \nonumber\\
    &dR_t^i = \mu _i(Y_t)dt + \sum\limits_{j=1}^{n} \sigma _{ij}(Y_t)dZ_t^j, \quad 1\leq i\leq n;\nonumber\\
    &dY_t^i = b_i(Y_t )dt + \sum\limits_{j=1}^{k} a_{ij} (Y_t) dW_t^j, \quad 1\leq i\leq k; \nonumber\\
    &d\langle R^i, Y^j\rangle_t = \Upsilon _{ij} (Y_t)dt, \quad 1\leq i\leq n,1\leq j\leq k,
\end{align*}  
where \(Z = (Z_j^i)_{t\geq 0} ^{1\leq i\leq n}\) and \(W = (W_t^i)_{t\geq 0}^{1\leq i\leq k}\) are multivariate standard Brownian motions with instantaneous cross correlation matrix \(\rho(Y_t) \in \mathbb{R}^{n \times k},\) and denote as $\mathbb{F} = \{\mathcal F_t\}_{t\geq 0}$ is the filtration generated by $Z$ and $W$. \(R = (R_t)_{t\geq 0}\) is the cumulative return process and the instantaneous covariance matrices are characterized by their quadratic variations and covariations:
\[\Sigma  = \sigma \sigma ^\top  = d\langle R, R \rangle_t / dt, \quad  A = aa^\top  = d \langle Y,Y \rangle_t /dt, \quad \Upsilon = \sigma \rho a^\top = d \langle R ,Y \rangle_t /dt. \]
Given an initial wealth $x$, an agent trades according to an $n$-dimensional proportional portfolio strategy $\pi = (\pi_t)_{t\geq 0}$ and consumes at a continuous rate $c_t = \ell_t X_t$. The wealth process $X^{\pi,\ell}$ follows
\begin{align*}
    dX_t^{\pi, \ell} = X_t^{\pi, \ell} (rdt + \pi^\top \mu(Y_t)  -\ell_t) dt + X_t^{\pi, \ell} \pi^\top_t \sigma(Y_t) dZ_t.
\end{align*}
To ensure the wealth and consumption processes remain well-defined and non-negative, we restrict our attention to the class of admissible control strategies.
\begin{definition}
The set of admissible investment and consumption strategies, denoted by $\mathcal{A}(x,y)$, consists of all pairs of $\mathbb{F}$-adapted processes $(\pi, \ell)$ such that starting from initial values $(X_0,Y_0) = (x,y)$, $\pi$ is integrable with respect to the return process $R$, and the corresponding wealth process $X^{\pi, \ell}$ and consumption proportion $\ell$ satisfy $X_t^{\pi, \ell} > 0$ and $\ell_t \ge 0$ $\mathbb{P}$-almost surely for all $t \ge 0$.
\end{definition}
The agent's objective is to maximize the expected cumulative discounted power utility from consumption over an infinite time horizon:
\begin{equation}
V(x,y) = \sup_{(\pi, \ell) \in \mathcal{A}(x,y)} \mathbb{E} \left[ \int_0^\infty e^{-\beta t} \frac{(\ell_t X_t^{\pi, \ell})^{1-\gamma}}{1-\gamma} dt \right].
\end{equation}
We assume that the problem is well-posed:
\begin{assumption}\label{assumption-1-example}
     $V(x,y) < \infty$.
\end{assumption}

The HJB equation corresponding to the above stochastic control problem is
\begin{equation}
-\beta v(x,y) + \sup_{\pi,\ell} \bH(x,y,\pi,\ell, v_x, v_{xx}, \nabla_y v, \nabla_y v_x) = 0,
\end{equation} 
where the Hamiltonian $\bH$
\begin{align*}
H(&x,y,\pi,\ell, v_x, v_{xx}, \nabla_y v, \nabla_y v_x) \nonumber \\
&= \frac{(\ell x)^{1-\gamma}}{1-\gamma} + x(r + \pi^\top \mu - \ell)v_x + b^\top \nabla_y v + \frac{1}{2}x^2 \pi^\top \Sigma \pi v_{xx} + \frac{1}{2}\operatorname{Tr}(A D_{yy}^2 v) + x \pi^\top \Upsilon \nabla_y v_x.
\end{align*}

To implement the continuous policy iteration algorithm for $(\pi, \ell)$, we first compute the gradients of the Hamiltonian with respect to the control variables:
\begin{align*}
\nabla_\pi H &= x \mu v_x + x^2 \Sigma \pi v_{xx} + x \Upsilon \nabla_y v_x,\\
\nabla_\ell H &= x^{1-\gamma} \ell^{-\gamma} - x v_x.
\end{align*}

For each state $(x,y)$, given initial policies $\pi_0^{x,y}$ and $\ell_0^{x,y}$, the continuous updates driven by Langevin-type dynamics over iteration time $\tau$ are
\begin{align*}
d\pi_\tau^{x,y} &= (x \mu v_x^\tau + x^2 \Sigma \pi_\tau^{x,y} v_{xx}^\tau + x \Upsilon \nabla_y v_x^\tau) d\tau + \sqrt{2\lambda} dB_\tau^\pi \\
d\ell_\tau^{x,y} &= (x^{1-\gamma} (\ell_\tau^{x,y})^{-\gamma} - x v_x^\tau) d\tau + \sqrt{2\lambda} dB_\tau^\ell.
\end{align*}
The value iteration driven by continuous dynamics is given by the following system:
\begin{align*}
dv^\tau &= \left(-\beta v^\tau + H(x,y,\pi_\tau, \ell_\tau, v_x^\tau, v_{xx}^\tau, \nabla_y v^\tau, \nabla_y v_x^\tau)\right) d\tau, \\
dv_x^\tau &= \left(-\beta v_x^\tau + \nabla_x\left(H(x,y,\pi_\tau, \ell_\tau, v_x^\tau, v_{xx}^\tau, \nabla_y v^\tau, \nabla_y v_x^\tau)\right)\right) d\tau, \\
dv_{xx}^\tau &= \left(-\beta v_{xx}^\tau + \nabla_{xx}\left(H(x,y,\pi_\tau, \ell_\tau, v_x^\tau, v_{xx}^\tau, \nabla_y v^\tau, \nabla_y v_x^\tau)\right)\right) d\tau, \\
d\nabla_y v^\tau &= \left(-\beta \nabla_y v^\tau + \nabla_y H(x,y,\pi_\tau, \ell_\tau, v_x^\tau, v_{xx}^\tau, \nabla_y v^\tau, \nabla_y v_x^\tau)\right) d\tau, \\
d\nabla_y v_x^\tau &= \left(-\beta \nabla_y v_x^\tau + \nabla_y\left(\nabla_x\left(H(x,y,\pi_\tau, \ell_\tau, v_x^\tau, v_{xx}^\tau, \nabla_y v^\tau, \nabla_y v_x^\tau)\right)\right)\right) d\tau.
\end{align*}

To simplify the numerical implementation, we take advantage of the homogeneity of the power utility and the linearity of the dynamics of the wealth process, and rewrite $v^\tau$ (the value corresponding to the control strategy defined in \eqref{eq:u-reverse}) as
\[
v^\tau(t,x,y)=\frac{x^{1-\gamma}}{1-\gamma}h^\tau(t,y).
\]
Differentiating with respect to the state variables yields
\[
\begin{aligned}
v_x^\tau(t,x,y)
    &=x^{-\gamma}h^\tau(t,y),\qquad v_{xx}^\tau(t,x,y)=-\gamma x^{-\gamma-1}h^\tau(t,y),\\
\nabla_y v^\tau(t,x,y)
    &=\frac{x^{1-\gamma}}{1-\gamma}\nabla_y h^\tau(t,y),\qquad 
\nabla_y v_x^\tau(t,x,y)
    =x^{-\gamma}\nabla_y h^\tau(t,y),\\
\end{aligned}
\]

Thus, only the reduced function $h^\tau$ needs to be stored and its \(y\) gradient can be approximated using finite differences. Next we verify the concavity of $\bH^\tau$ with respect to the control variables $\pi$ and $\ell$, so that the iteration converges even with temperature $\lambda = 0$.
\begin{lemma}
$\bH^\tau = \bH(x,y,\pi,l,v^\tau_x,v^\tau_{xx},\nabla_y v,\nabla_y v_{x})$, is strictly concave with respect to the control variables $\pi$ and $\ell$.
\end{lemma}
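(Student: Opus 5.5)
The plan is to compute the Hessian of $\bH^\tau$ in the joint control $(\pi,\ell)\in\R^n\times(0,\infty)$ and show it is negative definite. From the gradients displayed above, $\nabla_\pi\bH = x\mu v_x + x^2\Sigma\pi v_{xx} + x\Upsilon\nabla_y v_x$ and $\nabla_\ell\bH = x^{1-\gamma}\ell^{-\gamma} - xv_x$. Differentiating once more gives
\begin{align*}
\nabla^2_{\pi\pi}\bH^\tau = x^2 v^\tau_{xx}\,\Sigma,\qquad \nabla^2_{\ell\ell}\bH^\tau = -\gamma x^{1-\gamma}\ell^{-\gamma-1},\qquad \nabla^2_{\pi\ell}\bH^\tau = 0 .
\end{align*}
The cross term vanishes because $\nabla_\pi\bH$ does not contain $\ell$, and the Hessian is block diagonal. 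Strict concavity is therefore equivalent to each block being negative definite.

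For the consumption block, we have $x>0$ by admissibility, $\ell>0$ on the domain where the utility of consumption is differentiable, and $\gamma>0$ for power utility. Hence $-\gamma x^{1-\gamma}\ell^{-\gamma-1}<0$ and this block is strictly negative.

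For the portfolio block, we use the homogeneity reduction $v^\tau=\frac{x^{1-\gamma}}{1-\gamma}h^\tau(t,y)$. This gives $v^\tau_{xx} = -\gamma x^{-\gamma-1}h^\tau$, so
\begin{align*}
\nabla^2_{\pi\pi}\bH^\tau = -\gamma x^{1-\gamma}h^\tau(y)\,\Sigma(y).
\end{align*}
Since $\Sigma=\sigma\sigma^\top$ is assumed nondegenerate, it is positive definite. It then suffices to show $h^\tau>0$.

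The main obstacle is establishing $h^\tau>0$ along the iteration. I would argue directly from the definition \eqref{definition:v-tau-c}: $v^\tau$ is the expected discounted utility of the fixed admissible strategy $u^\tau$ built from the realized iterates. For $\gamma<1$ the integrand $\frac{(\ell X)^{1-\gamma}}{1-\gamma}$ is positive, and for $\gamma>1$ it is negative. In both cases the sign of $v^\tau$ agrees with the sign of $\frac{1}{1-\gamma}$, which forces $h^\tau = (1-\gamma)x^{\gamma-1}v^\tau>0$. Strictness requires the consumption to be positive on a set of positive measure, and I would take this as part of admissibility of $u^\tau$ (Assumption \ref{assumption-T-c}(i)).

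Alternatively, one could propagate $h^\tau>0$ through the ODE $d_\tau v^\tau = (-\beta v^\tau+\bH^\tau)\,d\tau$ from Lemma \ref{dynamic v classical}, but the representation argument avoids that. Combining the two blocks, the Hessian is negative definite, so $\bH^\tau$ is strictly concave in $(\pi,\ell)$.
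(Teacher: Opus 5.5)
Your proof is correct. It matches the paper up to the reduction to $v^\tau_{xx}<0$: the same block-diagonal Hessian with $\nabla^2_{\ell\ell}\bH^\tau=-\gamma x^{1-\gamma}\ell^{-\gamma-1}$ and $\nabla^2_{\pi\pi}\bH^\tau=x^2v^\tau_{xx}\Sigma$. You then prove that inequality by a different route.

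\textbf{The paper's route.} It shows $v^\tau$ is strictly concave in $x$ by a convex-combination argument. It runs the iteration strategies from two wealths $x_1\neq x_2$ and forms the wealth-weighted mixtures $\pi^{\lambda,\tau}_t,\ell^{\lambda,\tau}_t$. The resulting wealth is $\lambda X^{1,\tau}+(1-\lambda)X^{2,\tau}$ and the consumption is $\lambda c^{1,\tau}+(1-\lambda)c^{2,\tau}$. It then invokes strict concavity of the utility.

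\textbf{Your route.} You use the factorization $v^\tau=\frac{x^{1-\gamma}}{1-\gamma}h^\tau(t,y)$. Then $v^\tau_{xx}<0$ is equivalent to $h^\tau>0$, i.e.\ to $(1-\gamma)v^\tau>0$. You read this off the sign of the power-utility integrand.

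\textbf{What each buys.} Your argument is shorter and gives the pointwise inequality $v^\tau_{xx}<0$ directly. Strict concavity in $x$ by itself yields only $v^\tau_{xx}\le 0$ unless it is combined with the factorization. On the other hand, your argument is tied to power utility. It rests entirely on $h^\tau$ being independent of $x$; without that, the sign of $v^\tau$ says nothing about $v^\tau_{xx}$.

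The paper's argument looks like the standard concavity proof for a value function and never writes down the factorization. However, it identifies $v^\tau(x_\lambda,y)$ with the payoff of the mixed strategy. That is valid only if the mixture equals the feedback strategy $u^\tau$ run from $x_\lambda$, i.e.\ only if the proportions depend on $y$ alone. This is exactly the wealth-independence your proof uses, so both arguments rest on the same structural input. Yours makes that dependence explicit.
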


\begin{proof}{Proof.}
Since $\pi$ and $\ell$ are separated in $\bH^\tau$, to establish the strict joint concavity, we only need to check the second-order derivatives of $\bH^\tau$ with respect to  $\pi$ and $\ell$:
\begin{align}
\nabla^2_\ell H^\tau &= -\gamma x^{1-\gamma}\ell^{-\gamma-1}, \quad \nabla^2_\pi H^\tau = x^2 v_{xx}^\tau \Sigma \label{eq:H_pi_pi}.
\end{align}
Given that the risk aversion parameter $\gamma > 0$, the initial wealth $x > 0$, and the consumption proportion $\ell > 0$, it follows that $\frac{\partial^2 H^\tau}{\partial \ell^2} < 0$. This implies that $H^\tau$ is strictly concave with respect to $\ell$. For the negative definiteness of the Hessian $\nabla^2_\pi H^\tau$, it suffices to show that $v_{xx}^\tau < 0$.

Let $x_1$ and $x_2$ be two distinct initial wealth levels ($x_1 \neq x_2$), and $u^{1,\tau} = \{(\pi_t^{1,\tau}, \ell_t^{1,\tau})\}_{t\geq 0}$ and $u^{2,\tau} = \{(\pi_t^{2,\tau}, \ell_t^{2,\tau})\}_{t\geq 0}$ be two admissible control strategies associated with  initial $(x_1,y)$ and $(x_2,y)$, respectively under our iteration scheme. Denote as $X_t^{1,\tau}$ and $X_t^{2,\tau}$ the corresponding wealth process, alongside the consumption processes:
\begin{equation}
c_t^{1,\tau} = \ell_t^{1,\tau} X_t^{1,\tau} \quad \text{and} \quad c_t^{2,\tau} = \ell_t^{2,\tau} X_t^{2,\tau}.
\end{equation}
With $\lambda \in (0,1)$, let $x^\lambda = \lambda x_1 + (1-\lambda)x_2$, and $X_t^{\lambda,\tau} = \lambda X_t^{1,\tau} + (1-\lambda)X_t^{2,\tau}$ and define
\begin{equation}
\pi_t^{\lambda,\tau} = \frac{\lambda X_t^{1,\tau}\pi_t^{1,\tau} + (1-\lambda)X_t^{2,\tau}\pi_t^{2,\tau}}{X_t^{\lambda,\tau}}, \quad \ell_t^{\lambda,\tau} = \frac{\lambda X_t^{1,\tau}\ell_t^{1,\tau} + (1-\lambda)X_t^{2,\tau}\ell_t^{2,\tau}}{X_t^{\lambda,\tau}}.
\end{equation}
Since $u^{1,\tau}$ and $u^{2,\tau}$ are admissible, $X_t^{1,\tau} > 0$ and $X_t^{2,\tau} > 0$ almost surely. This guarantees $X_t^{\lambda,\tau} > 0$, rendering $(\pi_t^{\lambda,\tau}, \ell_t^{\lambda,\tau})$ admissible, and $X^{\lambda,\tau}$ is the corresponding wealth process, with the consumptions rate
\begin{equation}
c_t^{\lambda,\tau} = \ell_t^{\lambda,\tau} X_t^{\lambda,\tau} = \lambda c_t^{1,\tau} + (1-\lambda)c_t^{2,\tau}.
\end{equation}
Since the power utility function $U(c)$ is strictly concave in consumption,
\begin{align*}
v^\tau(x_\lambda, y) &= \mathbb{E}\left[ \int_0^\infty e^{-\beta t} U(c_t^{\lambda,\tau}) dt \ \middle|\ \mathcal{F}_\tau^B \right] \\
&> \lambda \mathbb{E}\left[ \int_0^\infty e^{-\beta t} U(c_t^{1,\tau}) dt \ \middle|\ \mathcal{F}_\tau^B \right] + (1-\lambda)\mathbb{E}\left[ \int_0^\infty e^{-\beta t} U(c_t^{2,\tau}) dt \ \middle|\ \mathcal{F}_\tau^B \right]\\
=& \lambda v^\tau(x_1, y) + (1-\lambda)v^\tau(x_2, y).
\end{align*}
Thus $v^\tau$ is strictly concave in $x$.
\end{proof}
\subsubsection{Numerical Implementation}
We first consider one-dimensional Heston model, with constant risk free rate $r$, and 
\[
\begin{aligned}
\frac{dS_t}{S_t}
    &= (r+\Lambda Y_t)\,dt+\sqrt{Y_t}\,dZ_t,\\
dY_t
    &= \kappa(\theta-Y_t)\,dt+\nu\sqrt{Y_t}\,dW_t,
\end{aligned}
\]
where $d\langle Z,W\rangle_t=\rho\,dt$. Here, $Y_t$ represents
the instantaneous variance of the risky asset, and $\Lambda$
denotes the market risk-premium. We adopt model parameters used in
\citet{gutekunst2026optimalinvestmentconsumptionstochastic},
originally taken from \citet{GW2020}, and initialize the iteration with $\pi_0(y)=0.4$ and $\ell_0(y)=0.01$.
\begin{align*}
\begin{gathered}
\gamma=2,\qquad \beta=0.02,\qquad
r=0.013,\qquad \Lambda=1.66,\\
\kappa=0.088,\qquad \theta=0.035,\qquad
\nu=0.031,\qquad \rho=-0.84.
\end{gathered}
\end{align*}

With $\lambda(\tau) = 10^{-6}e^{-0.03\tau}$, Figure \ref{fig:consumption bound and convergence} shows that the consumption strategy lies between the approximation bound provided in \citet{GW2020} and coincide with the results in \citet{gutekunst2026optimalinvestmentconsumptionstochastic} after 800 iterations. The right panel shows that the difference error between neighboring rounds decreases exponentially on a logarithmic scale. For Heston model \(\tau \in [ 0,800]\) with step size \(\Delta \tau = 0.1\) and 8,000 total steps.
\begin{figure}[H]
\FIGURE
{\includegraphics[width=\textwidth]{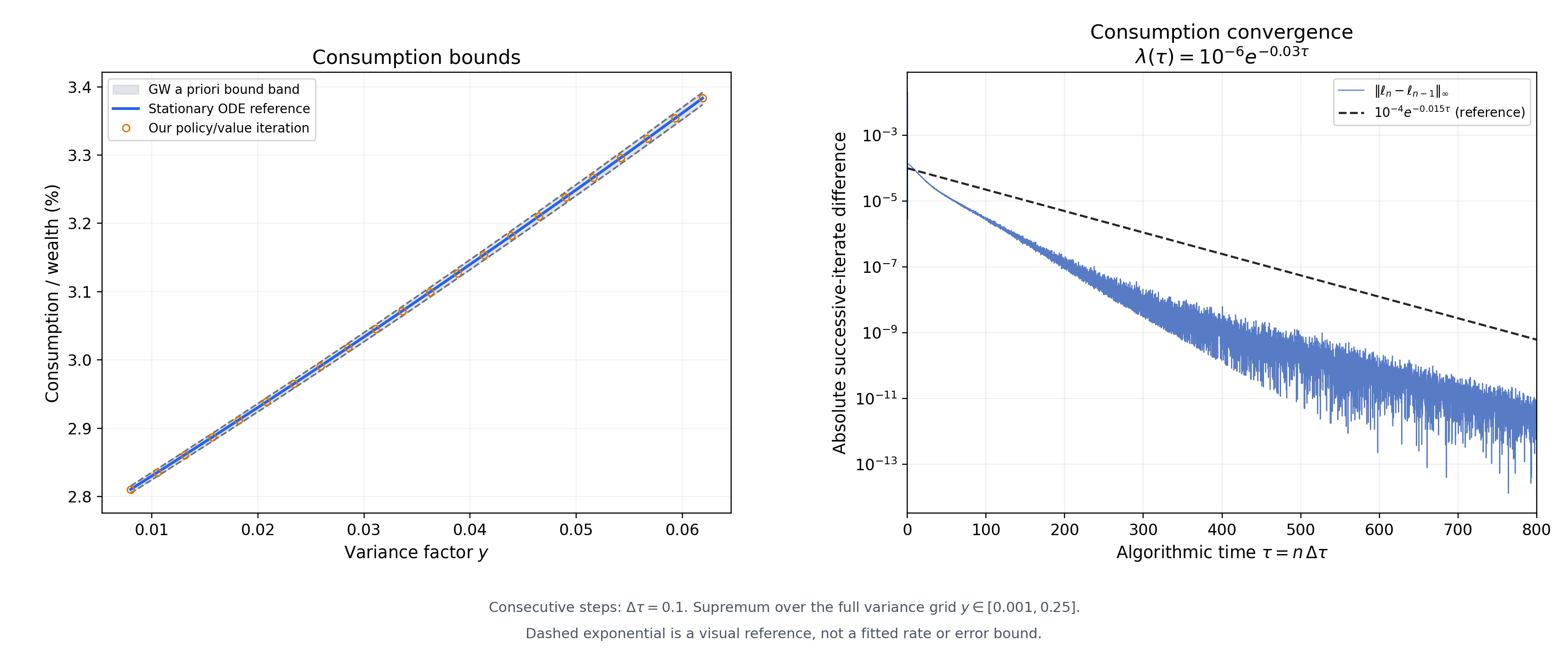}}
{Consumption and its convergence:
    The final iterate at $\tau=800$ (orange markers) lies within
    the bounds of \citet{GW2020} (gray band) at every checked
    grid point in $[0.008,0.062]$ and closely matches the benchmark from
\citet{gutekunst2026optimalinvestmentconsumptionstochastic}
    (blue curve).\label{fig:consumption bound and convergence}}
{}
\end{figure}

Figure \ref{fig: policy convergence} shows that the approximation for both the optimal control and the value function are accurate already after 200 iterations.
\begin{figure}[htbp]
\FIGURE
{\includegraphics[width=\textwidth]{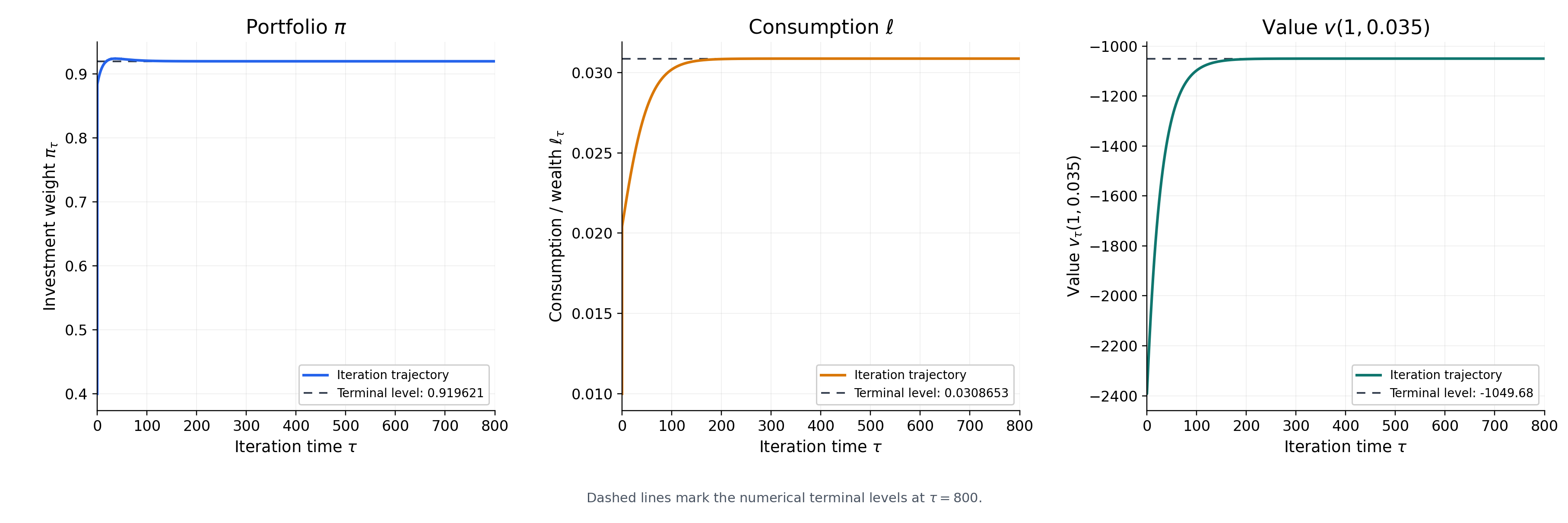}}
{Convergence to the optimum  at \(y = 0.035\) and \(x=1.\)\label{fig: policy convergence}}
{}
\end{figure}

\textbf{Two-factor Model.} We further investigate a model with two stochastic factors, for which the approximation methods in \citet{gutekunst2026optimalinvestmentconsumptionstochastic} does not apply. The factors \(Y_t^1\) and \(Y^2_t\) represent the market risk and (real) short rate. The market dynamics are 
\begin{align*}
    \frac{dS_t^0 }{S_t^0} =  Y_t^2 dt, \quad \frac{dS_t^1}{S_t^1}= (Y_t^2 + \sigma Y_t^1) dt + \sigma dZ_t,
\end{align*}
where the factors follow Ornstin-Uhlenbeck dynamics 
\begin{align*}
    dY_t^i= k_{i}(\theta_i - Y_t^i) dt + a_i dW_t^i, \quad i = 1,2.
\end{align*}
The Brownian motion satisfy $d\langle W^1, W^2 \rangle_t = 0$, and $d\langle Z, W^i\rangle_t = \rho_i dt$,for $i = 1,2$, and we choose 
\begin{align*}
\gamma = 2,\qquad \beta = 0.08,\qquad \sigma = 0.2, \qquad (\kappa_1,\kappa_2) = (0.3,0.2),\\
(\theta_1,\theta_2) = (0.35,0.04),\qquad (a_1,a_2) = (0.08,0.01),\qquad
(\rho_1,\rho_2) = (-0.8,-0.3).
\end{align*}
The iteration starts from \(\pi_0(y) = 0.5, \ell_0 (y) = 0.05.\) With $\lambda(\tau) = 0.05 e^{-0.1\tau}$, and step size \(\Delta \tau = 0.05\) on \(\tau \in [0,400]\). Since there is no benchmark solution to compare with in this two dimensional model, we assess the convergence through the successive differences of the policy and value, which decay exponentially in the right panel of Figure \ref{fig:policy-value-two-factors-convergence} on a logarithmic scale, for the approximation at $x=1$.  

\begin{figure}[H]
\FIGURE
{\includegraphics[width=\textwidth]{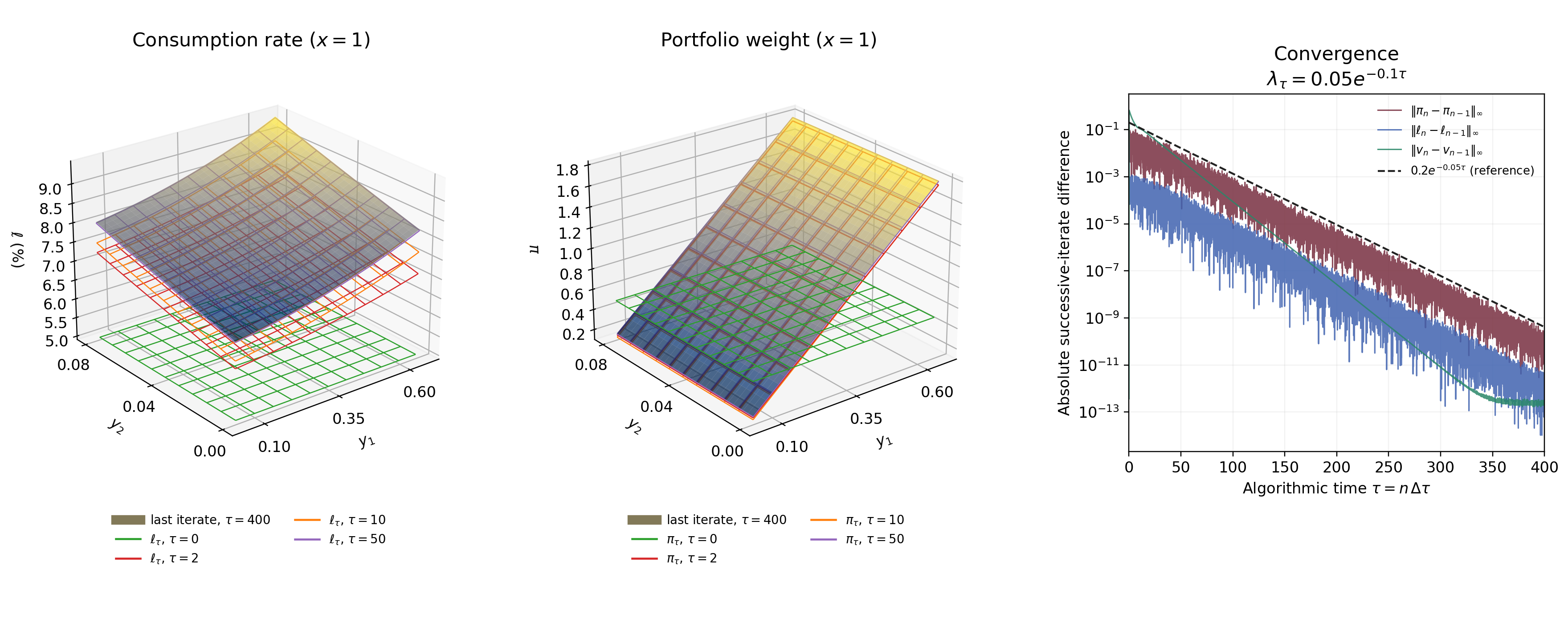}}
{Policy-value iteration with two stochastic factors. Left and middle figures: consumption rate and portfolio weight at iteration times \(\tau = 0,2, 10, 50, \) with shaded surfaces showing the final iterates at \(\tau = 400\). Right: successive-iterate differences. The dash line \(0.2 e^{-0.05 \tau}\) an exponential decay as a reference.\label{fig:policy-value-two-factors-convergence}}
{}
\end{figure}

The bounds for the optimal consumption from \citet{GW2020} provide another reference for our solution. At \(\tau = 61,\) the consumption rate lies between the bounds for the first time in the iteation at every grid point shown in the left panel of Figure \ref{fig:two-factor-bounds} (in order to distinguish between our approximate solution and the narrow bounds, we plot the distance from the upper bound, so that that the upperbound is represented as a constant $0$). While the central and right panel shows that from $\tau=61$ to the last iterate at $\tau=400$, the value continues to improve.
\begin{figure}[H]
\FIGURE
{\subfloat[Consumption rate at $\tau=61$.\label{fig:two-factor-bounds-consumption}]{%
    \includegraphics[width=0.49\textwidth]{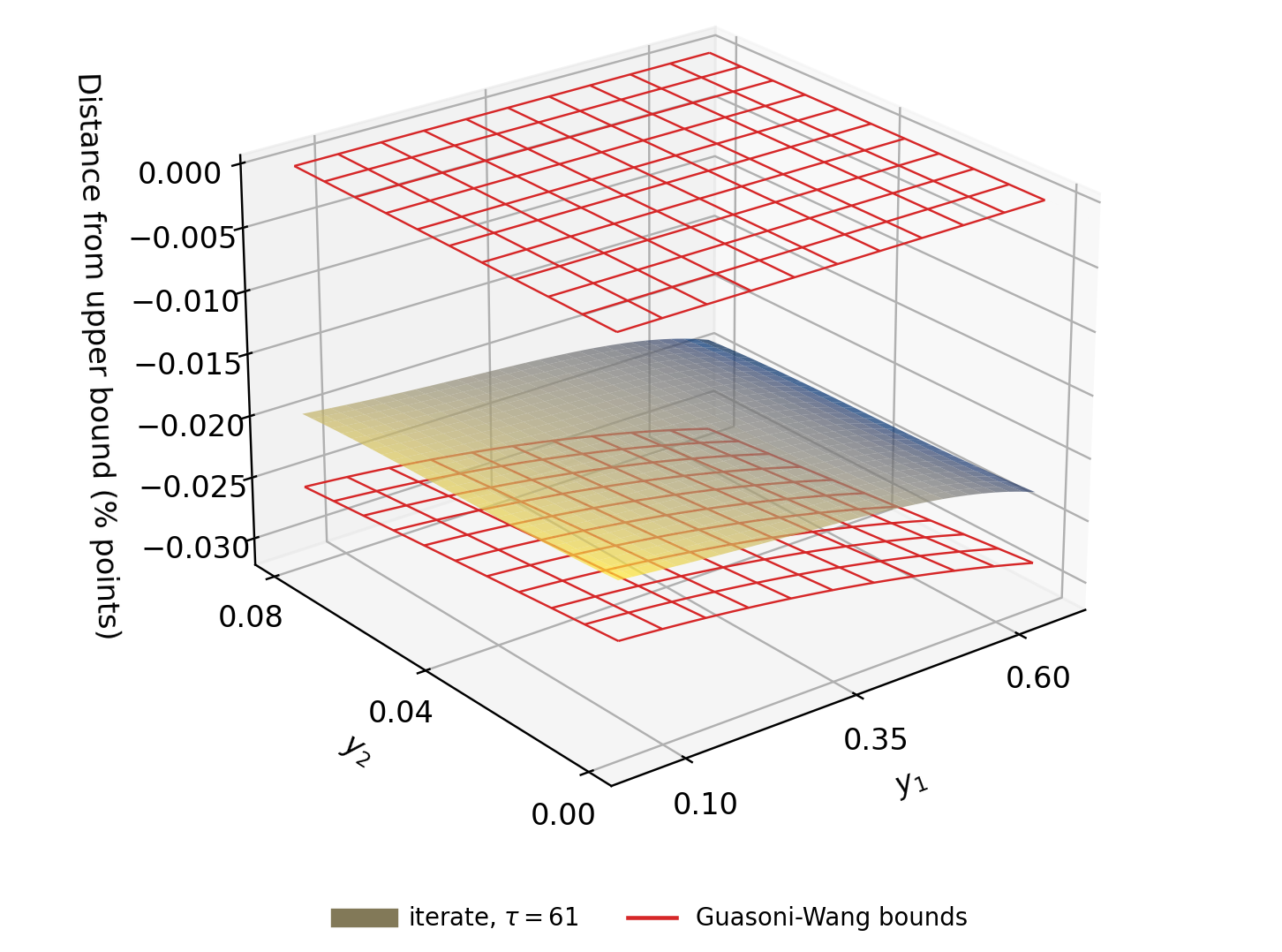}}\hfill
  \subfloat[Value at $\tau=61$ and $\tau=400$.\label{fig:two-factor-bounds-value}]{%
    \includegraphics[width=0.49\textwidth]{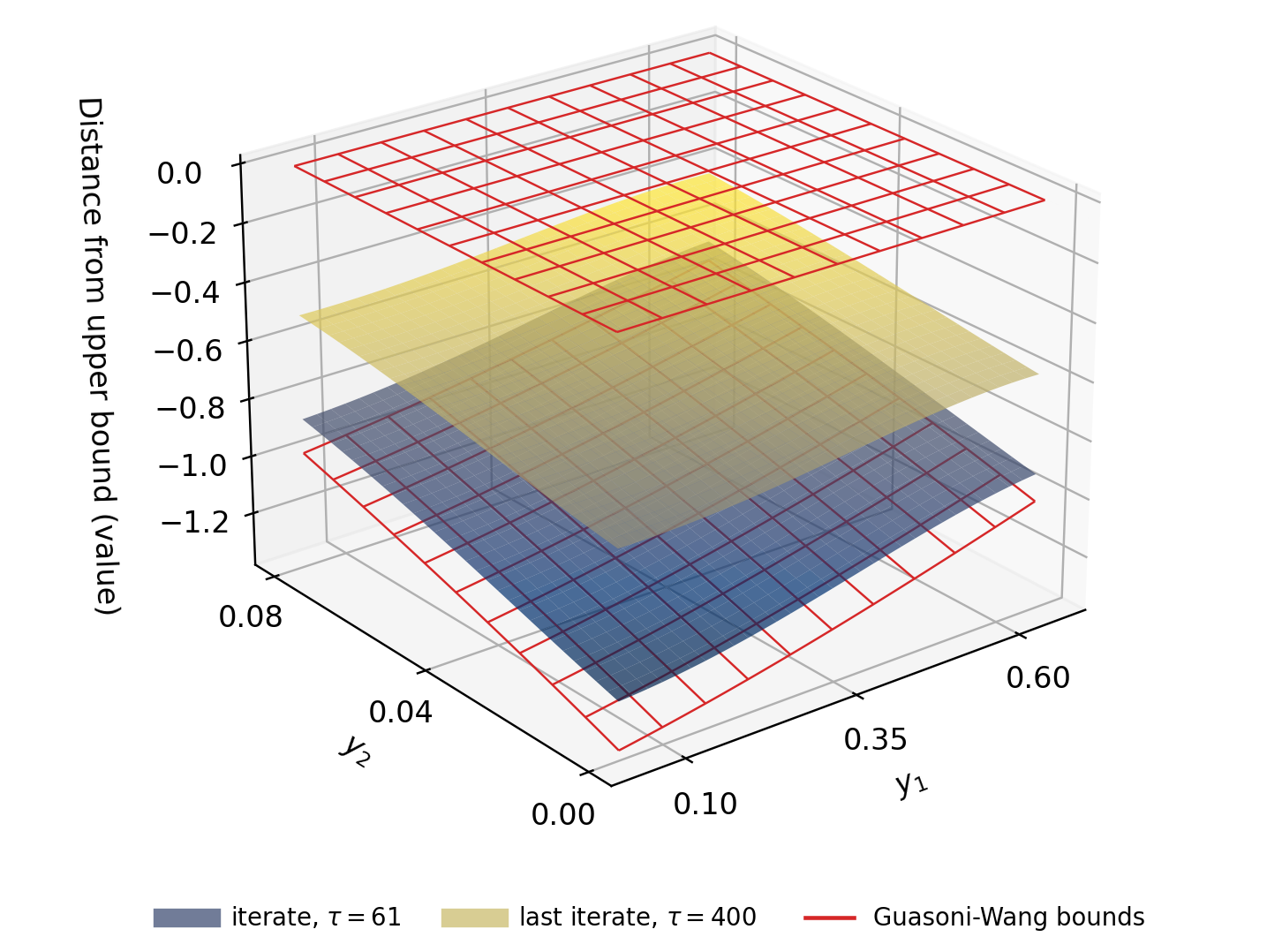}}}
{Distance from the upper bound at wealth $x=1$, for (a) the consumption rate and (b) the value. The red grids are the lower and upper bounds obtained from \cite{GW2020}. The iterates lie between the bounds at every grid point shown.\label{fig:two-factor-bounds}}
{}
\end{figure}

\bibliographystyle{agsm}
\bibliography{ref}

\end{document}